\newcommand{\maru}[1]{{\ooalign{\hfil#1\/\hfil\crcr
\raise.167ex\hbox{\mathhexbox20D}}}}
\newcommand{\ruby}[2]{%
 \leavevmode
 \setbox0=\hbox{#1}%
 \setbox1=\hbox{\tiny #2}%
 \ifdim\wd0>\wd1 \dimen0=\wd0 \end{lemma}se \dimen0=\wd1 \fi
 \hbox{%
   \kanjiskip=0pt plus 2fil
   \xkanjiskip=0pt plus 2fil
   \vbox{%
     \hbox to \dimen0{%
       \tiny \hfil#2\hfil}%
     \nointerlineskip
     \hbox to \dimen0{\mathstrut\hfil#1\hfil}}}}
\theoremstyle{plain}
\theoremstyle{definition}
\theoremstyle{remark}
\numberwithin{equation}{section}
\title[Generalised Checkerboard Lattices]{Generalised Checkerboard Lattices}
 \subjclass[2020]{Primary 11H06, Secondary 17B22}
 \keywords{Even lattices,  root systems, unimodular lattices, block designs}
\author[A. Matsuo]{Atsushi Matsuo}%
\address[A. Matsuo]{Graduate School of Mathematical Sciences, The University of Tokyo, Tokyo   153-8914, Japan }
\email{matsuo@ms.u-tokyo.ac.jp}
\author[H. Shimakura]{Hiroki Shimakura}
\address[H. Shimakura]{Department of Applied Mathematics, Faculty of Sciences, Fukuoka University, Fukuoka 814-0180, Japan }%
\email {shimakura@fukuoka-u.ac.jp}%
\date{}
\thanks{H.\ Shimakura was partially supported by JSPS KAKENHI Grant Numbers JP19KK0065, JP20K03505 and JP24K06658.}
\newtoks\thm@bodyfont  \thm@bodyfont{\slshape}
\newtoks\thm@headfont  \thm@headfont{\bfseries}
\newtoks\thm@notefont  \thm@notefont{}
\newtoks\thm@headpunct \thm@headpunct{.}
\newskip\thm@preskip \newskip\thm@postskip
\def\thm@space@setup{%
  \thm@preskip=\topsep \thm@postskip=\thm@preskip
}
\theoremstyle{plain}
\theoremstyle{remark}
\newtheorem{claim}{}
\def\claim[#1]{\global\def\thisclaim{#1}\csname \thisclaim\endcsname}
\def\endclaim{\csname end\thisclaim\endcsname}
\edef\theitemleftmargini{\the\wd0}
\let\olditemize\itemize\let\oldenditemize\enditemize
\def\itemize{\leftmargini\theitemleftmargini\olditemize\itemsep2pt\parskip0pt}
\def\enditemize{\oldenditemize}
\edef\theenumleftmargini{0pt}
\let\oldenumerate\enumerate\let\oldendenumerate\endenumerate
\let\org@item\@item
\def\enumerate{\leftmargini\theenumleftmargini\oldenumerate\itemsep2pt\parskip0pt\leftmargin0pt\def\@item[##1]{\org@item[]\hskip0pt##1\hskip\labelsep}}
\def\endenumerate{\oldendenumerate}
\def\@listi{\leftmargin\leftmargini%
              \topsep \z@%
              \parsep \z@%
              \itemsep \parsep}%
\def\@listI{\leftmargin\leftmargini%
  \topsep 0.15\baselineskip%
\relax}%
\def\@listii{\leftmargin\leftmarginii%
  \topsep 0.1\baselineskip%
\relax}%
\def\eqalign{\@ifnextchar<{\eqalign@a}{\eqalign@a<0pt>}}
\def\eqalign@a<#1>#2{%
\vcenter{%
\vskip0.2ex%
\addtolength{\baselineskip}{#1}%
\mathsurround=0pt\ialign{%
$\displaystyle##$\hfil&&$$$\displaystyle{}##$\hfil\crcr\mathstrut
\crcr\noalign{\kern-\baselineskip}#2\crcr\mathstrut\crcr
\noalign{\kern-\baselineskip}%
}%
\vskip0.2ex%
}%
}%
\def\now{{%
   \def\Time{3}%
   \def\Hour{4}%
   \def\Minute{5}%
   \count\Time=\time\relax
   \ifnum\count\Time=0
      \count\Time=1440 
   \fi
   \count\Hour=\count\Time\relax
   \divide\count\Hour by 60\relax
   \count\Minute=\count\Hour\relax
   \multiply\count\Minute by -60\relax
   \advance\count\Minute by \count\Time\relax 
   \ifnum\count\Hour=0
      \count\Hour=12
   \else
   \fi
   \the\count\Hour\relax:%
   \ifnum\count\Minute<10
      0%
   \fi
   \the\count\Minute\relax
}}%
\def\[{\begin{equation}}
\def\]{\end{equation}}
\begin{document}
%


\begin{abstract}\baselineskip15pt
\noindent
A series of integral lattices parametrised by integers $k,m,n$ are introduced and investigated, where $n$ is the rank of the lattice, including the root lattices described in a uniform way and unimodular lattices such as the Niemeier lattices of type $A_{24}$ and $D_{24}$. 
The lattices are characterised by means of a sublattice isomorphic to the root lattice of type $A_{n-1}$. 
A sufficient condition for existence of an orthogonal $k$-frame of such a lattice is given in terms of symmetric $2$-designs.
\end{abstract}
%
\maketitle

\section{Introduction}
\label{section:Introduction}
Let $k,m,n$ be integers with $m\ne 0$ and $n\geq 1$.
Consider the symmetric bilinear form $(\ |\ )_{k,m,n}$ on $\mathbb{Q}^n$ defined by the matrix
\[
\label{eq:240315-03}
B_{k,m,n}=I_n+\frac{k-m}{m^2}J_n, 
\]
where $I_n$ is the identity matrix of order $n$ and $J_n$ the all 1 matrix of the same order. 
Let $\mathrm{lat}(x)$ denote the sum of the entries of $x\in\mathbb{Q}^n$, which we call the {\it latitude}\/ of $x$. 
Then, for $x,y\in\mathbb{Q}^n$, 
\[
\label{eq:240305-01}
(x|y)_{k,m,n}=x\cdot y+\frac{\mathrm{lat}(x)\,\mathrm{lat}(y)}{m^2}(k-m)
,\]
where $x\cdot y$ denotes the Euclidean inner product of $x,y\in\mathbb{Q}^n$. 
We will denote the squared norm $(x|x)_{k,m,n}$ by $|x|_{k,m,n}^2$.
We sometimes omit writing the subscripts for short. 

Consider the subgroup $L_{k,m,n}$ of the additive group $\mathbb{Z}^n\subset \mathbb{Q}^n$ consisting of the elements of latitude divisible by $m$:
\[
L_{k,m,n}=\{x\in \mathbb{Z}^n\,|\,\mathrm{lat}(x)\in m\mathbb{Z}\}
.\]
It is clear by \eqref{eq:240305-01} that the free abelian group $L_{k,m,n}$ equipped with $(\ |\ )_{k,m,n}$ is an integral lattice.

For $h\in m\mathbb{Z}$, let $L_{k,m,n}[h]$ denote the subset of $L_{k,m,n}$ consisting of the elements of latitude $h$, so that
\[
L_{k,m,n}=\bigsqcup_{m|h}L_{k,m,n}[h]
.\]
The subset $M=L_{k,m,n}[0]$ of latitude $0$ is a sublattice of $L_{k,m,n}$, 
which we will call the {\it base lattice}.

Let $e_1,\ldots,e_n$ be the standard basis of $\mathbb{Q}^n$. 
Then the base lattice $M$ is isomorphic to the root lattice $A_{n-1}$ generated by the simple roots $e_2-e_1,\ldots,e_{n}-e_{n-1}$.
The action of $M$ on $L_{k,m,n}[h]$ by addition is clearly transitive, thus $L_{k,m,n}[h]=M+x$ for any element $x$ of latitude $h$.
In particular, any element of latitude $m$ together with $e_2-e_1,\ldots,e_{n}-e_{n-1}$ form a basis of the lattice $L_{k,m,n}$. 

For instance, we may choose $me_1$ as the element of latitude $m$.
Then the Gram matrix with respect to the basis $me_1,e_2-e_1,\ldots,e_{n}-e_{n-1}$ reads
\[
G_{k,m,n}
=\begin{bmatrix}
m^2-m+k\ 
&-m&&&\cr
-m\ &\phantom{-}2&-1&&\cr
&-1&\phantom{-}2&-1&\cr
&&\ddots&\ddots&\ddots\cr
&&&-1&\phantom{-}2&-1\ \cr
&&&&-1&\phantom{-}2\ \cr
\end{bmatrix}.
\]
Note that $|me_1|^2=m^2-m+k$ is even if and only if $k$ is even.

Let us now assume $1\leq m\leq n$.
Then we may choose $e_1+\cdots+e_m$ as the element of latitude $m$, which satisfies
\[
|e_1+\cdots+e_m|^2=k,\ \ (e_{i+1}-e_i|e_1+\cdots+e_m)=-\delta_{i,m}\ \ (1\leq i\leq n-1)
.\]
Therefore, if $1\leq m\leq n-1$, then the elements form the nodes of the (generalized) Coxeter-Dynkin diagram
\[
\xymatrix@M0pt@R0pt@C0pt{
&&&&&&\hskip-2em \scriptstyle e_1+\cdots+e_m\vcenter to 2ex{}\hskip-2em \cr
&&&&&&
\circ\ar@{-}[dd]
\cr
\vcenter to 2em{}\cr
\circ\ar@{-}[rr]&\hskip3em&\circ\ar@{-}[rr]&\hskip3em&\ \cdots\ \ar@{-}[rr]&\hskip3em&\circ\ar@{-}[rr]&\hskip3em&\ \cdots\ \ar@{-}[rr]&\hskip3em&\circ\ar@{-}[rr]&\hskip3em&\circ\cr
\hskip-2em {\scriptstyle e_2-e_1}\hskip-2em &&\hskip-2em {\scriptstyle e_3-e_2}\hskip-2em &&&&\hskip-2em {\scriptstyle e_{m+1}-e_{m}}\hskip-2em &&&&\hskip-2em {\scriptstyle  e_{n-1}-e_{n-2}}\hskip-2em &&\hskip-1.5em {\scriptstyle  e_{n}-e_{n-1}}\hskip-2.5em 
\cr
\vcenter to 3ex{}\cr}
\]
while for $m=n$, the lattice $L_{k,n,n}$ is isomorphic to the orthogonal sum of the base lattice $M\cong A_{n-1}$ and a one-dimensional lattice generated by $e_1+\cdots+e_n$.

For $k=2$, the lattices $L_{2,1,n}$ are the $A_{n}$ lattices described in an unusual way, and the lattices $L_{2,2,n}$ for $n\geq 2$ are the $D_{n}$ lattices, where $D_2=A_1\oplus A_1$ and $D_3=A_3$. 
Moreover, the lattices $L_{2,3,n}$ agree with the $E_{n}$ lattices for $3\leq n\leq 8$, where $E_3=A_2\oplus A_1$, $E_4=A_4$, $E_5=D_5$, thus giving rise to a uniform construction of the root lattices:
\[
\label{eq:240312-01}
L_{2,1,n}=A_n\ (1\leq n),\ \ 
L_{2,2,n}=D_n\ (2\leq n),\ \ 
L_{2,3,n}=E_n\ (3\leq n\leq 8).
\]
The construction of $L_{2,3,n}=E_n$ for $n=6,7,8$ is seen to be the dual of the uniform construction of $E_6,E_7,E_8$ by Shioda \cite{Shioda} (cf.\ Note \ref{claim:240318-03}). 

The lattice $L_{k,m,n}$ is worth calling the {\it generalised checkerboard lattice}, for the lattices $L_{2,2,n}=D_n$ are often called the checkerboard lattice, among which the lattice $D_2=A_1\oplus A_1$ is the ordinary checkerboard.

By construction, the lattice $L_{k,m,n}$ contains a primitive sublattice of rank $n-1$ isomorphic to the root lattice $A_{n-1}$.
We  will show in Sect.\ 3 that this property characterises the lattices $L_{k,m,n}$ (Theorem 1), after describing some basic properties of $L_{k,m,n}$ in Sect.\ 2. 

In Sect.\ 4, we will examine the cases when the lattice $L_{k,m,n}$ becomes a root lattice (Subsect.\ 4.1) and those when it is unimodular (Subsect.\ 4.2).
For example, the Niemeier lattices (\cite{Niemeier}) of type $A_{24}$ and $D_{24}$ arise as $N(A_{24})=L_{4,5,24}$ and $N(D_{24})=L_{6,11,24}$, respectively. 
We will also describe a particular setting under which the lattice $L_{k,m,n}$ carries an orthogonal $k$-frame (Subsect.\ 4.3).
In particular, any symmetric $2$-$(n,m,\lambda)$ design gives rise to an orthogonal $(m-\lambda)$-frame of $L_{m-\lambda,m,n}$ if $1\leq m\leq n-1$ (Proposition \ref{claim:240326-01}). 

In Appendix A, we list up the roots when $L_{2,m,n}$ is a root lattice and calculate the orders of the Weyl groups as an application. 

The existence of a sublattice of type $A_{n-1}$ in our construction, although not a full sublattice, is in contrast with the existence of a sublattice of type $A_1^{n}=\sqrt{2}\mathbb{Z}^n$ in Construction A of Leech-Sloane \cite{LeechSloane}, a construction of integral lattices from binary codes.

We note that our construction can be generalised at least in two directions: one is to adjoin codes over the ring $\mathbb{Z}/r\mathbb{Z}$ for an integer $r\geq 2$, and the other is to replace $A_{n-1}$ or the ambient lattice $\mathbb{Z}^n$ with other lattices. 
A recent work of Shimada \cite{Shimada} seems to fit in a position of such generalisation, as Corollary 1.1 of Ref.\ \cite{Shimada} certainly shares the same feature with ours. 

After finishing this work, the authors came to know that the construction for $k=2$ has already been considered by Jensen et al.\ \cite{JensenEtAl} and further studied by Baur et al.\ \cite{BaurEtAl2023}. 

We refer the reader to \cite{ConwaySloane}, \cite{Ebeling}, \cite{MilnorHusemoller}, and \cite{Serre} for integral lattices, to \cite{Bourbaki} and \cite{Humphreys} for root systems, and to \cite{BethEtAl} and \cite{Lander} for block designs. 
%
\section{Basic properties of the lattices}
\label{section:BasicProperties}
%
%
Let $k,m,n$ be integers with $m\ne 0$ and $n\geq 1$ and consider the lattice $L_{k,m,n}$.
%
%
\subsection{Determinants and signatures}
\label{subsection:Determinants}
The determinant of the matrix $B_{k,m,n}$ of the bilinear form $(\ |\ )_{k,m,n}$ on $\mathbb{Q}^n$ is easily calculated, and the result reads
\[
\det B_{k,m,n}=\det \Bigl(I_n+\frac{k-m}{m^2}J_n\Bigr)
=1+\frac{(k-m)n}{m^2}
.\]
Since $|\mathbb{Z}^n/L_{k,m,n}|=m$, the determinant (or the discriminant) of the lattice $L_{k,m,n}$ is given by $m^2\det B_{k,m,n}$, thus
\[
\det L_{k,m,n}
=m^2-mn+kn
.\]
As the base lattice $M=L_{k,m,n}[0]\cong A_{n-1}$ is positive-definite of rank $n-1$, the signature of $L_{k,m,n}$ is determined by the sign of $\det L_{k,m,n}$.
\begin{claim}[Proposition]
\sl
Let $k,m,n$ be integers with $m\ne 0$ and $n\geq 1$. 
\begin{itemize}
\item[\rm(1)]
If $m^2-mn+kn>0$, then the signature of $L_{k,m,n}$ is $(n,0,0)$.
\item[\rm(2)]
If $m^2-mn+kn=0$, then the signature of $L_{k,m,n}$ is $(n-1,0,1)$.
\item[\rm(3)]
If $m^2-mn+kn<0$, then the signature of $L_{k,m,n}$ is $(n-1,1,0)$.
\end{itemize}
\end{claim}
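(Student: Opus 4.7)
The plan is to exploit the positive definite sublattice $M=L_{k,m,n}[0]\cong A_{n-1}$ of rank $n-1$, already identified in the text, in order to pin down all but one of the eigenvalues of the Gram form, and then detect the sign of the remaining one from the determinant $\det L_{k,m,n}=m^2-mn+kn$ computed just above the statement.

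First I would observe that for $x,y\in M$ one has $\mathrm{lat}(x)=\mathrm{lat}(y)=0$, so \eqref{eq:240305-01} gives $(x|y)_{k,m,n}=x\cdot y$; thus the form restricted to $M\otimes\mathbb{R}$ is just the Euclidean inner product, hence positive definite. By Sylvester's law of inertia (or, equivalently, Cauchy interlacing applied to a Gram matrix such as $G_{k,m,n}$), this forces at least $n-1$ positive eigenvalues in the full form on $L_{k,m,n}\otimes\mathbb{R}$. Since the rank is $n$, the signature $(p,q,r)$ must satisfy $p\geq n-1$ and $p+q+r=n$, so the only possibilities are $(n,0,0)$, $(n-1,1,0)$, and $(n-1,0,1)$.

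It then remains to decide which case occurs. Since $\det L_{k,m,n}=m^2-mn+kn$ equals the product of the $n$ eigenvalues, and $n-1$ of them are already known to be strictly positive, the sign of the determinant agrees with the sign of the remaining eigenvalue, with $\det L_{k,m,n}=0$ corresponding to that eigenvalue being zero. The three conditions $\det L_{k,m,n}>0$, $=0$, $<0$ therefore produce precisely the signatures $(n,0,0)$, $(n-1,0,1)$, $(n-1,1,0)$, matching the three cases of the statement. No step is really hard here; the only thing worth getting right is the invocation of Sylvester's law to lift positivity on $M$ to an inertia bound on $L_{k,m,n}$, after which the classification is immediate from the sign of the already computed determinant.
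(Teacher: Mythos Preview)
Your argument is correct and is precisely the paper's own reasoning: the text immediately before the proposition says ``As the base lattice $M=L_{k,m,n}[0]\cong A_{n-1}$ is positive-definite of rank $n-1$, the signature of $L_{k,m,n}$ is determined by the sign of $\det L_{k,m,n}$,'' and you have simply spelled out that sentence via Sylvester's law and the sign of the determinant. There is no difference in approach to comment on.
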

In other words, $L_{k,m,n}$ is positive-definite, positive-semidefinite with one-dimensional radical, and hyperbolic if and only if $m^2-mn+kn$ is positive, zero, and negative, respectively.

The parameters $k,m,n$ for which the number $m^2-mn+kn$ coincides with a given integer $d$ are characterised as follows.

\begin{claim}[Lemma]
\label{claim:240310-01}
\sl
Let $d,k,m,n$ be integers.
Then $m^2-mn+kn=d$ if and only if there exist integers $p,q$ satisfying
\[
\label{eq:240310-01}
k^2-d=pq,\ \ 
m=k+p,\ \ 
n=2k+p+q.
\]
\end{claim}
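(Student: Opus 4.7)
The plan is to argue by direct algebraic verification in both directions, since the two linear conditions $m=k+p$ and $n=2k+p+q$ uniquely determine $p$ and $q$ in terms of $k,m,n$, and then the multiplicative condition $k^2-d=pq$ becomes an identity to check.

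For the forward direction, assume $m^2-mn+kn=d$. The equations $m=k+p$ and $n=2k+p+q$ force the definitions
\[
p:=m-k,\qquad q:=n-k-m,
\]
both of which are clearly integers. I would then compute $pq=(m-k)(n-k-m)$ by expansion and observe that the cross terms $\pm km$ cancel, leaving
\[
pq = mn - m^{2} - kn + k^{2} = k^{2}-(m^{2}-mn+kn)=k^{2}-d,
\]
as required.

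For the converse, assume integers $p,q$ are given with $k^2-d=pq$, $m=k+p$, $n=2k+p+q$. I would substitute these expressions into $m^2-mn+kn$ and simplify. Writing $m-n=(k+p)-(2k+p+q)=-(k+q)$ makes the computation efficient:
\[
m^{2}-mn+kn = m(m-n)+kn = -(k+p)(k+q)+k(2k+p+q),
\]
and expanding shows all mixed terms cancel, leaving $k^{2}-pq=k^{2}-(k^{2}-d)=d$.

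There is no real obstacle here; the lemma is a reformulation of the Diophantine equation $m^{2}-mn+kn=d$ via the linear change of variables $(m,n)\mapsto(p,q)=(m-k,\,n-k-m)$, under which the quadratic form becomes $k^{2}-pq$. The only thing to be careful about is ensuring the bijection between solutions $(m,n)\in\mathbb{Z}^{2}$ and pairs $(p,q)\in\mathbb{Z}^{2}$, which is immediate because the change of variables and its inverse both have integer coefficients.
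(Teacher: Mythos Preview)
Your proof is correct, and considerably simpler than the paper's. You observe that the linear relations $m=k+p$ and $n=2k+p+q$ are invertible over $\mathbb{Z}$, so you may simply \emph{define} $p=m-k$ and $q=n-k-m$ and verify $pq=k^{2}-d$ by a one-line expansion; the converse is the same expansion read backwards.

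The paper takes a more roundabout route for the forward implication. It treats $m^{2}-mn+kn=d$ first as a quadratic in $m$, observing that the discriminant $n^{2}-4kn+4d$ must be a perfect square $s^{2}$; it then treats this in turn as a quadratic in $n$, forcing $4k^{2}-4d+s^{2}$ to be a perfect square $t^{2}$. From $4k^{2}-4d=t^{2}-s^{2}$ one sets $p=(t+s)/2$, $q=(t-s)/2$ and checks integrality and the sign ambiguities. This discriminant approach has the flavour of ``solving'' the equation rather than reparametrising it, and it requires a small argument to sort out the $\pm$ signs on $s,t$. Your linear change of variables sidesteps all of that: it makes transparent that $m^{2}-mn+kn$ becomes $k^{2}-pq$ under an integral affine substitution, so the equivalence is really just a tautology. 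The paper's method does make visible the factorisation $(n-2k)^{2}-(n^{2}-4kn+4d)=4(k^{2}-d)$, which motivates where $p$ and $q$ come from, but once one has guessed the answer your verification is the cleaner proof.
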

\begin{proof}[Proof.]
It is easy to check that \eqref{eq:240310-01} implies $m^2-mn+kn=d$.
Conversely, let $d,k,m,n$ be integers and assume $m^2-mn+kn=d$. 
Since $m$ is an integer, the discriminant $n^2-4kn+4d$ is a square of an integer $s\geq 0$.
Moreover, since $n$ is an integer, $4k^2-4d+s^2$ is a square of an integer $t\geq 0$.
For such $s$ and $t$, we have
\[
\label{eq:240310-02}
4k^2-4d=t^2-s^2
,\ \ 
2m=2k\pm t\pm s 
,\ \ 
n=2k\pm t
.\]
Let $p$ and $q$ be the halves of $t+s$ and $t-s$, respectively.
They are integers by \eqref{eq:240310-02}, and any change of signs of $s$ and $t$ is achieved by replacing $(p,q)$ with one of $\pm(p,q),\pm(q,p)$, all of which are solutions $(x,y)$ of the same equation $k^2-d=xy$. 
We may therefore assume 
that the signs in \eqref{eq:240310-02} are all positive, and the conditions \eqref{eq:240310-02} reduce to \eqref{eq:240310-01} as desired.
\end{proof}
Applying Lemma \ref{claim:240310-01} to the parameters of the lattice $L_{k,m,n}$, we immediately obtain that $\det L_{k,m,n}=d$ if and only if there exist integers $p,q$ satisfying the conditions \eqref{eq:240310-01}.
\begin{claim}[Remark]
\label{claim:240315-01}
If $\det L_{k,m,n}=d$, then $n^2-4kn+4d\geq 0$, hence $k\leq n/4+d/n$ if $n\ne 0$.
In particular, if $(n,d)=(8,1)$, $(7,2)$, or $(6,3)$, for instance, then $k\leq 2$. 
\end{claim}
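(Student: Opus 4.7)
The remark follows almost immediately from the discussion just before it. The plan is this: by the formula $\det L_{k,m,n}=m^2-mn+kn$ established in Section 2.1, the hypothesis $\det L_{k,m,n}=d$ is equivalent to
\[
m^2-nm+(kn-d)=0.
\]
Viewing this as a quadratic equation in $m$ with integer coefficients, the fact that $m$ is a (real) integer forces the discriminant to be nonnegative, giving $n^2-4kn+4d\geq 0$. This is exactly the first assertion.

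For the second assertion, when $n\ne 0$, divide by $4n$; since in our setup $n\geq 1$ the inequality is preserved, and rearranging gives $k\leq n/4+d/n$.

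Finally, the three particular cases are pure arithmetic: $8/4+1/8=17/8<3$, $7/4+2/7=57/28<3$, and $6/4+3/6=2$, so in each of $(n,d)=(8,1),(7,2),(6,3)$ one obtains $k\leq 2$, as stated. The only substantive point in the argument is the observation that the integrality of $m$ forces the quadratic discriminant to be nonnegative; no further obstacle is expected.
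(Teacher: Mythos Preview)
Your argument is correct and matches the paper's own reasoning: in the proof of Lemma~\ref{claim:240310-01} it is observed that, since $m$ is an integer solution of $m^2-nm+(kn-d)=0$, the discriminant $n^2-4kn+4d$ is a nonnegative (in fact, square) integer, and Remark~\ref{claim:240315-01} is just this observation spelled out together with the elementary rearrangement and the three numerical checks you give.
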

%
\subsection{The opposite lattices}
\label{subsection:Opposite}
\label{240318-09}
It is clear from the diagram in Sect.\ 1 that the lattices $L_{k,m,n}$ and $L_{k,n-m,n}$ are isomorphic to each other if $1\leq m\leq n-1$. 

To be more precise, let $k,m,n$ be arbitrary integers with $m\ne 0$ and $n\geq 1$. 
Consider the linear map $\theta_{m,n}\colon \mathbb{Q}^n\to \mathbb{Q}^n$ defined by setting 
\[
\label{eq:240318-05}
\theta_{m,n}(x)=x-\frac{\mathrm{lat}(x)}{m}(e_1+\cdots+e_n)
.\]
In particular, $\theta_{m,n}(e_i-e_j)=e_i-e_j$ for all $i\ne j$ and 
\[
\label{eq:240318-07}
\theta_{m,n}(e_1+\cdots+e_n)=-\frac{n-m}{m}(e_1+\cdots+e_n)
.\]
Since $e_1+\cdots+e_n,e_2-e_1,\ldots,e_n-e_{n-1}$ form a basis of $\mathbb{Q}^n$, the map $\theta_{m,n}$ is a linear isomorphism with the inverse given by $\theta_{n-m,n}$ unless $m=n$.

Now the latitudes of elements of $\mathbb{Q}^n$ behave under $\theta_{m,n}$ as
\[
\mathrm{lat}(\theta_{m,n}(x))=-\frac{\mathrm{lat}(x)}{m}(n-m)
.\]
Thus $\mathrm{lat}(x)\in m\mathbb{Z}$ implies $\mathrm{lat}(\theta_{m,n}(x))\in (n-m)\mathbb{Z}$.
Therefore, if $m\ne n$, then the map $\theta_{m,n}$ restricts to a homomorphism of abelian groups between $L_{k,m,n}$ and $L_{k,n-m,n}$, and it follows from \eqref{eq:240305-01} and \eqref{eq:240318-05} that
$(\theta_{m,n}(x)|\theta_{m,n}(y))_{k,n-m,n}
=(x|y)_{k,m,n}
$
holds for all $x,y\in\mathbb{Q}^n$.

We have thus obtained the following.
\begin{claim}[Proposition]
\sl
Let $k,m,n$ be integers with $m\ne 0,n$ and $n\geq 1$. 
Then $\theta_{m,n}$ induces an isomorphism of lattices between $L_{k,m,n}$ and $L_{k,n-m,n}$. 
\end{claim}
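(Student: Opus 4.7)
The plan is to assemble three short verifications from the observations already made in the surrounding text. First I would confirm that $\theta_{m,n}$ sends $L_{k,m,n}$ into $L_{k,n-m,n}$. Given $x\in L_{k,m,n}$, write $\mathrm{lat}(x)=m\ell$ with $\ell\in\mathbb{Z}$. By \eqref{eq:240318-05} one then has $\theta_{m,n}(x)=x-\ell(e_1+\cdots+e_n)\in\mathbb{Z}^n$, and the latitude formula displayed just above the statement gives $\mathrm{lat}(\theta_{m,n}(x))=-\ell(n-m)\in(n-m)\mathbb{Z}$, placing $\theta_{m,n}(x)$ in $L_{k,n-m,n}$; this uses $n-m\ne 0$, which holds because $m\ne n$.

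Next I would invoke the isometry identity $(\theta_{m,n}(x)|\theta_{m,n}(y))_{k,n-m,n}=(x|y)_{k,m,n}$ already noted in the text. Expanding both sides via \eqref{eq:240305-01} and using $\mathrm{lat}(\theta_{m,n}(x))=-\mathrm{lat}(x)(n-m)/m$, the Euclidean cross terms involving $e_1+\cdots+e_n$ contribute $(\mathrm{lat}(x)\mathrm{lat}(y)/m^2)(n-2m)$ to $\theta_{m,n}(x)\cdot\theta_{m,n}(y)$, which combines with the correction $(\mathrm{lat}(x)\mathrm{lat}(y)/m^2)(k-(n-m))$ on the left-hand side to produce exactly $(\mathrm{lat}(x)\mathrm{lat}(y)/m^2)(k-m)$, as required.

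Finally I would obtain surjectivity by symmetry: since $m\ne n$, the map $\theta_{n-m,n}$ is the two-sided inverse of $\theta_{m,n}$ on $\mathbb{Q}^n$, and applying the first step with $m$ replaced by $n-m$ shows that $\theta_{n-m,n}$ sends $L_{k,n-m,n}$ into $L_{k,m,n}$. The two restrictions are then mutual inverses, and combined with the isometry identity they yield the desired isomorphism of lattices. I do not anticipate any genuine obstacle here---the only step not already explicit in the surrounding paragraphs is the one-line algebraic manipulation in the isometry identity, and even that is fully determined by \eqref{eq:240305-01} and \eqref{eq:240318-05}.
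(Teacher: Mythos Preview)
Your proposal is correct and follows essentially the same route as the paper: the paper's ``proof'' is the discussion immediately preceding the Proposition, which verifies that $\theta_{m,n}$ maps $L_{k,m,n}$ into $L_{k,n-m,n}$ via the latitude formula, asserts the isometry identity as a consequence of \eqref{eq:240305-01} and \eqref{eq:240318-05}, and appeals to the inverse $\theta_{n-m,n}$---exactly the three steps you spell out. You are simply more explicit about the integrality of $\theta_{m,n}(x)$ and the one-line algebra behind the isometry, neither of which the paper writes out.
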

We will say that the lattice $L_{k,n-m,n}$ is {\it opposite}\/ to $L_{k,m,n}$. 
The proposition says that a pair of opposite lattices are isomorphic to each other.
For example, if $1\leq m\leq n-1$, then  
\[
\theta_{m,n}(e_{1}+\cdots+e_{m})=-(e_{m+1}+\cdots+e_{n})
.\]

\begin{claim}[Remark]
\label{claim:240324-02}
\begin{enumerate}
\item[(1)]
If $\det B\ne 0$, then $e_1+\cdots+e_n$ spans the subspace $({M}^{\perp})_{\mathbb{Q}}$ orthogonal to the $\mathbb{Q}$-span $M_{\mathbb{Q}}$ of the base lattice $M=L[0]$, and the properties $\theta_{m,n}=1$ on $M_{\mathbb{Q}}$ and $\theta_{m,n}=-(n-m)/m$ on $({M}^{\perp})_{\mathbb{Q}}$ characterise the map $\theta_{m,n}$.
\item[(2)]
Since the bilinear form is stable under the change of parameters given by $(k,m)\leftrightarrow(k-2m,-m)$, that is, $(\ |\ )_{k,m,n}=(\ |\ )_{k-2m,-m,n}$, the lattices $L_{k,m,n}$ and $L_{k-2m,-m,n}$ are identical.
Therefore, the parameters of the lattice $L_{k,m,n}$ are put in a form satisfying $1\leq m\leq n$ by repeatedly applying $L_{k,m,n}=L_{k-2m,-m,n}$ and $L_{k,m,n}\cong L_{k,n-m,n}$. 
If $1\leq m\leq n-1$, we may further assume $1\leq m\leq n-m$.
\end{enumerate}
\end{claim}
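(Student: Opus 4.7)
The plan is to verify the two items of the remark separately; both are essentially direct calculations with the defining formulas, using only facts already established in the excerpt.

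For (1), I first observe that $M_{\mathbb{Q}}$ is spanned by the latitude-$0$ vectors $e_{i+1}-e_i$. Formula \eqref{eq:240305-01} then gives $(e_1+\cdots+e_n\,|\,x)=(e_1+\cdots+e_n)\cdot x=0$ for every $x\in M_{\mathbb{Q}}$, so $e_1+\cdots+e_n\in({M}^{\perp})_{\mathbb{Q}}$. Since $\det B\ne 0$ the form is non-degenerate and $\dim({M}^{\perp})_{\mathbb{Q}}=n-(n-1)=1$, so $e_1+\cdots+e_n$ spans it. The characterisation then follows at once: the decomposition $\mathbb{Q}^n=M_{\mathbb{Q}}\oplus({M}^{\perp})_{\mathbb{Q}}$ determines any $\mathbb{Q}$-linear map by its restrictions to the two summands, and formulas \eqref{eq:240318-05} and \eqref{eq:240318-07} exhibit the restrictions of $\theta_{m,n}$ as the identity on $M_{\mathbb{Q}}$ and as scalar multiplication by $-(n-m)/m$ on $({M}^{\perp})_{\mathbb{Q}}$, respectively.

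For (2), I substitute $(k-2m,-m)$ for $(k,m)$ in \eqref{eq:240305-01}: the denominator $(-m)^2$ equals $m^2$ and the factor $(k-2m)-(-m)$ equals $k-m$, so the bilinear forms literally coincide, and combined with the set equality $m\mathbb{Z}=(-m)\mathbb{Z}$ this yields $L_{k,m,n}=L_{k-2m,-m,n}$ as integral lattices. For the parameter reduction I combine this identity with the opposite isomorphism $L_{k,m,n}\cong L_{k,n-m,n}$ of the preceding proposition. The two resulting moves on the parameter $m$ are $m\mapsto -m$ and $m\mapsto n-m$; their composition $m\mapsto m-n$ lets one subtract $n$ repeatedly, so any $m\ne 0$ is brought into $\{1,\ldots,n\}$ after finitely many steps. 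When $1\leq m\leq n-1$, one further application of $m\mapsto n-m$ replaces $m$ by $\min(m,n-m)$ and secures the strengthened range $1\leq m\leq n-m$.

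I do not foresee a genuine obstacle: the whole argument is bookkeeping of how the bilinear form \eqref{eq:240305-01} transforms under the two parameter operations, together with a single dimension count for non-degeneracy in (1) and a finite-termination argument for the reduction in (2). The only point that requires slight care is to confirm that the intermediate parameters in the reduction chain remain in the admissible range $m\ne 0$ (and $m\ne n$ when the opposite isomorphism is invoked), which is immediate because subtracting $n$ keeps $m$ in a fixed nonzero residue class.
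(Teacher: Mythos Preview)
Your argument is correct. The paper states this remark without proof, treating both parts as immediate consequences of the formulas \eqref{eq:240305-01}, \eqref{eq:240318-05}, \eqref{eq:240318-07} and the preceding proposition, and your write-up is precisely the routine verification that justifies this. One tiny imprecision: your closing sentence about the residue class being ``nonzero'' does not literally cover the case $n\mid m$, but your reduction still works there because you only apply $m\mapsto m-n$ while $m>n$, so all intermediate values stay positive and distinct from $0$ and $n$.
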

%
\subsection{The dual lattices}
\label{subsection:240318-01}
Recall that the {\it dual lattice}\/ of an integral lattice $L\subset \mathbb{Q}^n$ is the free abelian group
$L^*=\{x\in {\mathbb{Q}^n}\,|\,(x|L)\subset\mathbb{Z}\}$
equipped with the restriction of the $\mathbb{Q}$-linear extension of the bilinear form of $L$, which need not be an integral lattice, but a {\it rational}\/ lattice. 

Recall the matrix $B=B_{k,m,n}$ of the bilinear form $(\ |\ )_{k,m,n}$ on $\mathbb{Q}^n$ defined by \eqref{eq:240315-03}. 
Assume $\det B\ne 0$, that is, the bilinear form is nondegenerate.
Identify $\mathbb{Q}^n$ with its dual space via the bilinear form.

Let $e^1,\ldots,e^n$ be the dual basis of the standard basis $e_1,\ldots,e_n$ of $\mathbb{Q}^n$. 
The two bases are related as
\[
e_i=\sum_{j=1}^{n}(e_i|e_j)e^j
\hbox{\ \ and\ \ }
e^i=\sum_{j=1}^{n}(e^i|e^j)e_j
.\]
Here the coefficients of the second equalities are the entries of the inverse matrix $C=B^{-1}$, given by
\[
\label{eq:240318-02}
(e^i|e^j)
=\delta_{i,j}-\frac{k-m}{m^2-mn+kn}
=\delta_{i,j}+\frac{m-k}{\det L_{k,m,n}}
.\]
Consider the following vector in $\mathbb{Q}^n$:
\[
\label{eq:240318-08}
e^0=\frac{1}{m}(e^1+\cdots+e^n)
.\]
Then, for $x=(x_1,\ldots,x_n)\in\mathbb{Z}^n\subset\mathbb{Q}^n$, 
\[
\label{claim:240317-01}
(x|e^0)=\frac{\mathrm{lat}(x)}{m},\ \ 
(x|e^i)=x_i\ \ (1\leq i\leq n)
.\]
Therefore, it is immediate that the dual lattice $L_{k,m,n}^*$ includes the $\mathbb{Z}$-span of $e^0,e^1,\ldots,e^n$, and it is easy to show the opposite inclusion as well.
We thus obtain the following.  
\begin{claim}[Proposition]
\sl
Let $k,m,n$ be integers with $m\ne 0$ and $n\geq 1$.
If $\det B\ne 0$, then $e^0,e^1,\ldots,e^n$ generate the dual lattice $L_{k,m,n}^*$.
\end{claim}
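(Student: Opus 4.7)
The plan is to prove the equality $L_{k,m,n}^* = \mathbb{Z}e^0 + \mathbb{Z}e^1 + \cdots + \mathbb{Z}e^n$ by establishing the two inclusions separately. For the easier inclusion $\supset$, I would invoke the pairing formulas in \eqref{claim:240317-01}: these show at once that each of $e^0, e^1, \ldots, e^n$ pairs integrally with an arbitrary $x \in L_{k,m,n}$, using $L_{k,m,n} \subset \mathbb{Z}^n$ to handle $e^i$ for $1\le i\le n$ and the defining condition $\mathrm{lat}(x) \in m\mathbb{Z}$ to handle $e^0$.

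For the reverse inclusion I would take an arbitrary $y \in L_{k,m,n}^*$ and expand it in the dual basis as $y = \sum_{i=1}^{n} y_i e^i$, where $y_i = (e_i \mid y) \in \mathbb{Q}$. The strategy is to pin down the $y_i$ modulo $\mathbb{Z}$ by pairing $y$ with two families of concrete test vectors drawn from $L_{k,m,n}$. First, each difference $e_i - e_j$ has latitude $0$ and hence lies in $L_{k,m,n}$, so the condition $(e_i - e_j \mid y) \in \mathbb{Z}$ forces $y_i - y_j \in \mathbb{Z}$; thus all the $y_i$ share a common fractional part. Second, the vector $m e_1$ has latitude $m$ and hence lies in $L_{k,m,n}$, so $(m e_1 \mid y) \in \mathbb{Z}$ forces $m y_1 \in \mathbb{Z}$, and therefore $y_1 = a/m$ for some $a \in \mathbb{Z}$. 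Setting $n_i := y_i - y_1 \in \mathbb{Z}$, these two facts combine to give
\[
y = \sum_{i=1}^n \Bigl(\tfrac{a}{m} + n_i\Bigr) e^i = a\cdot e^0 + \sum_{i=1}^n n_i e^i,
\]
which exhibits $y$ as an integral linear combination of $e^0, e^1, \ldots, e^n$.

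I do not anticipate any serious obstacle. The nondegeneracy hypothesis $\det B \ne 0$ enters only to legitimise the identification of $\mathbb{Q}^n$ with its dual and hence the expansion of $y$ in the basis $e^1, \ldots, e^n$. The one judgement call is the choice of test vectors, but both $e_i - e_j$ and $m e_1$ are the most natural elements of $L_{k,m,n}$ singled out by its definition via the latitude, and together they exactly match the two generators $e^0$ and the $e^i$ of the claimed generating set.
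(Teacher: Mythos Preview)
Your proof is correct and is exactly the natural elaboration of what the paper leaves implicit: the paper derives the pairing formulas \eqref{claim:240317-01} and then simply asserts that the inclusion $\mathbb{Z}e^0+\cdots+\mathbb{Z}e^n\subset L_{k,m,n}^*$ is immediate and that ``it is easy to show the opposite inclusion as well,'' without giving further details. Your choice of test vectors $e_i-e_j$ and $me_1$ (a basis of $L_{k,m,n}$) is precisely how one would flesh this out.
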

\begin{claim}[Remark]
\label{claim:240402-01}
\begin{enumerate}
\item[(1)]
If $1\leq m\leq n-1$, then $(e^0|e_{i+1}-e_{i})=0$
($1\leq i\leq n-1$) and $(e^0|e_1+\cdots+e_m)=1$, whence for $k=2$ the vector $e^0$ is the fundamental weight corresponding to the node $e_1+\cdots+e_m$ of the diagram in Sect.\ 1.
\item[(2)]
If $1\leq m\leq n$, then the element $z_j=(m-k)(e_1+\cdots+e_n)+(\det L)e_j$ of $L=L_{k,m,n}$ satisfies $(e_i|z_j)=(\det L)\delta_{i,j}$ ($1\leq i,j\leq n$).
Hence $w_n=(\det L)^{-1}z_n$ is the fundamental weight corresponding to the node $\alpha_{n-1}=e_n-e_{n-1}$ if $k=2$, $1\leq m\leq n-1$, and $\det L\ne 0$.
\end{enumerate}
\end{claim}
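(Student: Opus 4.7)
The plan is to verify both parts of the remark by direct computation, invoking the pairing formula of Subsect.\ 2.3 and the description of simple roots from the diagram in Sect.\ 1.

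For part (1), I would simply apply the relation $(x|e^0)=\mathrm{lat}(x)/m$ from \eqref{claim:240317-01}. Since $e_{i+1}-e_i$ has latitude $0$, we obtain $(e^0|e_{i+1}-e_i)=0$, while $e_1+\cdots+e_m$ has latitude $m$, giving $(e^0|e_1+\cdots+e_m)=1$. For $k=2$ the lattice $L_{2,m,n}$ is a root lattice in the sense of \eqref{eq:240312-01}, and the diagram in Sect.\ 1 exhibits $e_2-e_1,\ldots,e_n-e_{n-1}$ together with $e_1+\cdots+e_m$ as simple roots forming a $\mathbb{Z}$-basis of the lattice; the computed pairings then say precisely that $e^0$ is the fundamental weight dual to the extra node $e_1+\cdots+e_m$.

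For part (2), I would first confirm $z_j\in L_{k,m,n}$: manifestly $z_j\in\mathbb{Z}^n$, and its latitude is $n(m-k)+\det L=n(m-k)+(m^2-mn+kn)=m^2\in m\mathbb{Z}$. Using the bilinear form formula \eqref{eq:240305-01} one has $(e_i|e_\ell)=\delta_{i\ell}+(k-m)/m^2$, and summing over $\ell$ yields $(e_i|e_1+\cdots+e_n)=1+n(k-m)/m^2=(\det L)/m^2$. Substituting into the definition of $z_j$ and simplifying,
\[
(e_i|z_j)=(m-k)\frac{\det L}{m^2}+(\det L)\Bigl(\delta_{ij}+\frac{k-m}{m^2}\Bigr)=(\det L)\delta_{ij},
\]
since the two $(m-k)(\det L)/m^2$ terms cancel.

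For the fundamental weight statement with $k=2$, $1\leq m\leq n-1$, and $\det L\ne 0$, the simple roots of $L_{2,m,n}$ are $\alpha_i=e_{i+1}-e_i$ ($1\leq i\leq n-1$) together with $\beta=e_1+\cdots+e_m$. Setting $w_n=(\det L)^{-1}z_n$, the identity just established yields $(w_n|\alpha_i)=\delta_{i+1,n}-\delta_{i,n}=\delta_{i,n-1}$ and $(w_n|\beta)=\sum_{j=1}^m\delta_{jn}=0$, where the latter uses $m\leq n-1$; thus $w_n$ pairs to $1$ only with $\alpha_{n-1}$, as claimed. The calculations are routine; the only bookkeeping point is distinguishing the ambient vectors $e_j\in\mathbb{Q}^n$, which generally lie outside $L$, from the lattice vectors $z_j\in L$, and using that the $\mathbb{Q}$-bilinear extension of $(\ |\ )_{k,m,n}$ makes $(e_i|z_j)$ meaningful.
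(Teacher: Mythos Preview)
Your verification is correct. The paper states this as a Remark without proof, so there is no argument in the paper to compare against; your direct computations using \eqref{claim:240317-01} for part~(1) and the explicit form of $B_{k,m,n}$ for part~(2) are exactly the intended checks. One small inaccuracy: you write that ``for $k=2$ the lattice $L_{2,m,n}$ is a root lattice in the sense of \eqref{eq:240312-01}'', but this holds only for the specific $(m,n)$ listed there (cf.\ Subsect.~4.1); fortunately your argument does not actually use this, since the fundamental-weight interpretation only requires that the basis $e_2-e_1,\ldots,e_n-e_{n-1},\,e_1+\cdots+e_m$ consists of norm-$2$ vectors forming the diagram of Sect.~1, which is automatic once $k=2$.
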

%
\section{Characterisation of the lattices}
\label{section:Characterisation}
Recall that a sublattice $M$ of an integral lattice $L$ is said to be {\it primitive}\/ if $L/M$ is torsion free, or equivalently, if $L\cap M_{\mathbb{Q}}=M$, where $M_{\mathbb{Q}}$ is the $\mathbb{Q}$-span of the sublattice $M$.
We will say that a lattice is {\it nondegenerate}\/ if the bilinear form is nondegenerate.
%
\subsection{Statement of the result}
\label{subsection:240307-02}
Consider the base lattice $M=L[0]$ of the lattice $L=L_{k,m,n}$.
Then, by construction, it satisfies the following properties: 
\begin{itemize}
\item[(i)]
$M$ is a primitive sublattice of rank $n-1$.
\item[(ii)]
$M$ is isomorphic to the root lattice $A_{n-1}$.
\end{itemize}
Existence of such a sublattice $M$ characterises the lattices $L_{k,m,n}$ as follows.
\begin{claim}[Theorem]
\label{claim:240316-06}
\sl
Let $L$ be an integral lattice of rank $n$.
Then the following conditions are equivalent to each other:
\rm
\begin{itemize}
\item[(1)]
\sl
$L$ is isomorphic to $L_{k,m,n}$ for some integers $k,m$ with $1\leq m\leq n$. 
\rm
\item[(2)]
\sl
$L$ contains a sublattice $M$ satisfying the conditions {\rm (i)} and {\rm (ii)}.
\end{itemize}
\end{claim}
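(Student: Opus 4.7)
The implication $(1)\Rightarrow(2)$ is immediate from the construction: for $L=L_{k,m,n}$, the base lattice $M=L[0]$ is the kernel of the surjection $L\to\mathbb{Z}$ sending $x$ to $\mathrm{lat}(x)/m$, hence primitive of rank $n-1$, and it is isomorphic to $A_{n-1}$ as observed in Section~\ref{section:Introduction}.

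For $(2)\Rightarrow(1)$, the plan is to build an explicit isometric isomorphism of $L$ onto some $L_{k,m,n}$ with $1\leq m\leq n$. Using the given isomorphism $M\cong A_{n-1}$, I identify $M$ with the latitude-zero sublattice of $\mathbb{Z}^n$, so that its simple roots become $\alpha_i=e_{i+1}-e_i$ ($1\leq i\leq n-1$). Since $M$ is primitive of rank $n-1$ in the rank-$n$ lattice $L$, the quotient $L/M$ is infinite cyclic; I pick $v\in L$ projecting to a generator, so that $\alpha_1,\ldots,\alpha_{n-1},v$ form a $\mathbb{Z}$-basis of $L$.

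The crux is to replace $v$ by a suitable integer vector $\tilde v\in\mathbb{Z}^n$. Consider the linear functional $\alpha\mapsto(v|\alpha)_L$ on $M$, an element of $M^*$. The assignment $y\mapsto(\alpha\mapsto y\cdot\alpha)$ defines a surjection $\mathbb{Z}^n\to M^*$ with kernel $\mathbb{Z} u$, where $u=e_1+\cdots+e_n$; indeed, given prescribed values $\phi(\alpha_i)=c_i$, one produces a preimage by solving the recursion $y_{i+1}=y_i+c_i$, and the kernel consists of the constant vectors. Lifting our functional to some $\tilde v\in\mathbb{Z}^n$, its latitude is well-defined modulo $n$, and by adjusting by a multiple of $u$ I may arrange $\mathrm{lat}(\tilde v)=m$ for some $m\in\{1,\ldots,n\}$. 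Now set $k:=(v|v)_L-\tilde v\cdot\tilde v+m$, and define $\phi\colon L\to L_{k,m,n}$ on the chosen basis by $\alpha_i\mapsto e_{i+1}-e_i$ and $v\mapsto\tilde v$.

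Finally, I must verify that $\phi$ is a bijection onto $L_{k,m,n}$ and an isometry. Bijectivity holds because each element of $L_{k,m,n}$ of latitude $tm$ decomposes uniquely as $t\tilde v$ plus a latitude-zero part in $A_{n-1}$. For the isometry property, by bilinearity it suffices to check the three inner products among basis vectors: the pairings within $M$ match because the Euclidean form on the latitude-zero sublattice of $\mathbb{Z}^n$ recovers the standard $A_{n-1}$ form; $(v|\alpha_i)_L=\tilde v\cdot\alpha_i$ holds by the defining property of $\tilde v$; and $(v|v)_L=(\tilde v|\tilde v)_{k,m,n}=\tilde v\cdot\tilde v+k-m$ holds by the choice of $k$. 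The main obstacle is the functional-lifting step, which requires both the surjectivity of $\mathbb{Z}^n\to M^*$ and the care to arrange $m\in\{1,\ldots,n\}$ rather than $\{0,\ldots,n-1\}$, so as to avoid the forbidden value $m=0$.
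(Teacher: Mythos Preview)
Your proof is correct and takes a genuinely different, more direct route than the paper's. The paper works via the orthogonal decomposition $L_{\mathbb{Q}}=M_{\mathbb{Q}}\oplus(M^{\perp})_{\mathbb{Q}}$: it first establishes through the preliminary Lemmas~\ref{claim:240306-02} and~\ref{claim:240306-03} that the finite quotient $L/(M+M^{\perp})$ is cyclic, then manipulates a representative $\lambda$ of a generator together with a generator $\nu$ of $M^{\perp}$ so that a suitable combination $\beta=a\lambda+\mu+b\nu$ has $\beta_M$ equal to a fundamental weight of $A_{n-1}$; the degenerate case $L=M+M^{\perp}$ (which gives $m=n$) is handled separately. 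You bypass $M^{\perp}$ entirely: primitivity gives $L/M\cong\mathbb{Z}$ immediately, and you lift the functional $(v\,|\,\cdot)_L\in M^*$ through the elementary surjection $\mathbb{Z}^n\twoheadrightarrow M^*$ with kernel $\mathbb{Z}u$, then normalise the latitude modulo $n$. Your argument is shorter, needs no case split, and dispenses with the auxiliary lemmas; in exchange, the paper's approach lands directly on the Coxeter--Dynkin normal form $\beta_M=$ (a specific fundamental weight), i.e.\ identifies $\beta$ with $e_1+\cdots+e_m$ up to $W(A_{n-1})$, whereas your $\tilde v$ is only some element of latitude $m$ in $\mathbb{Z}^n$ --- though of course this distinction evaporates once the isomorphism $L\cong L_{k,m,n}$ is established.
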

The proof will be given in Subsect.\ \ref{subsection:240307-01}. 
\begin{claim}[Remark]
\label{claim:240317-02}
Let $k,m,n$ be integers with $m\ne 0$ and $n\geq 1$.
Then, by Theorem 1, there exist some integers $k',m'$ with $1\leq m'\leq n$ satisfying $L_{k,m,n}\cong L_{k',m',n}$ (cf.\ Remark \ref{claim:240324-02} (2)). 
\end{claim}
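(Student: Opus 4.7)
The plan is to apply Theorem~\ref{claim:240316-06} to the lattice $L=L_{k,m,n}$ itself, taking as distinguished sublattice the base lattice $M=L_{k,m,n}[0]$. So I would proceed by verifying conditions (i) and (ii) of the theorem for the pair $(L,M)$.

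Condition (ii), namely $M\cong A_{n-1}$, is already recorded in Section~1: the vectors $e_2-e_1,\dots,e_n-e_{n-1}$ generate $M$ and form a system of simple roots of type $A_{n-1}$. For condition (i), I would note first that $L$ has rank $n$, since the $n$ vectors $me_1, e_2-e_1,\dots,e_n-e_{n-1}$ displayed in Section~1 are linearly independent and span $L$. To see that $M$ is primitive, I would use the latitude map $\mathrm{lat}\colon L\to m\mathbb{Z}$, which is a surjective group homomorphism (it sends $me_1\in L$ to $m$) with kernel $M$; hence $L/M\cong m\mathbb{Z}\cong\mathbb{Z}$ is torsion-free, and moreover of rank $1$, so $M$ is a primitive sublattice of rank $n-1$. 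With both (i) and (ii) in hand, Theorem~\ref{claim:240316-06} furnishes integers $k',m'$ with $1\le m'\le n$ and $L\cong L_{k',m',n}$, which is precisely the desired conclusion.

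The parenthetical reference to Remark~\ref{claim:240324-02}(2) points to an explicit alternative proof avoiding Theorem~\ref{claim:240316-06}: the equality $L_{k,m,n}=L_{k-2m,-m,n}$ and the isomorphism $L_{k,m,n}\cong L_{k,n-m,n}$ (Proposition~2.2) together let one negate $m$ and, by composition, shift $m$ by $\pm n$. Starting from any $m\ne 0$, one can first arrange $m>0$ by a sign flip if necessary, and then subtract $n$ repeatedly until $m$ falls inside $[1,n]$; this terminates since each step strictly reduces $|m|$ whenever $m>n$. Either route yields the claimed $(k',m')$.

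I expect no real obstacle here, because Theorem~\ref{claim:240316-06} does all the substantive work: the only item to be checked independently is the primitivity of the base lattice, and that follows at once from the latitude description of $L_{k,m,n}$. The only mild subtlety is to remember that Theorem~\ref{claim:240316-06} is stated for arbitrary integral lattices of rank $n$ without a definiteness hypothesis, so the argument goes through uniformly for all parameter values with $m\ne 0$, including the semidefinite and hyperbolic cases identified in Proposition~2.1.
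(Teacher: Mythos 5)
Your proposal is correct and follows the paper's intended argument exactly: the remark is meant to be read off by applying Theorem \ref{claim:240316-06} (direction (2)$\Rightarrow$(1)) to the base lattice $M=L_{k,m,n}[0]$, whose properties (i) and (ii) the paper records "by construction" at the start of Subsect.\ 3.1, with Remark \ref{claim:240324-02}(2) cited as the explicit parameter-reduction alternative. Your verification of primitivity via the latitude homomorphism $\mathrm{lat}\colon L\to m\mathbb{Z}$ and your termination argument for the reduction $m\mapsto m-n$ are both sound fillings-in of details the paper leaves implicit.
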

%
%
\subsection{Preliminaries on lattices}
\label{subsection:Preliminaries}
Let $L$ be an integral lattice of rank $n$ and $M$ a sublattice of $L$.
Let ${{M}^{\perp}}$ denote the annihilator of $M$ in $L$ defined as
\[
\label{eq:240401-01}
{{M}^{\perp}}=\{x\in L\,|\,(x|M)=0\}.
\]
Then $M$ is nondegenerate if and only if $L_{\mathbb{Q}}$ decomposes into the orthogonal sum of $M_{\mathbb{Q}}$ and $({M}^{\perp})_{\mathbb{Q}}$. 

Assume that $M$ is nondegenerate and consider the projections $i_M \colon L \to M_{\mathbb{Q}}$ and $i_{{M}^{\perp}}\colon L\to ({M}^{\perp})_{\mathbb{Q}}$ according to the orthogonal decomposition.
Denote their values at $x$ by $x_M$ and $x_{{M}^{\perp}}$. 
These maps induce maps
\[
j_M\colon L/(M+{{M}^{\perp}})\to M_{\mathbb{Q}}/M
\hbox{\ \ and\ \ }
j_{{M}^{\perp}}\colon L/(M+{{M}^{\perp}})\to ({M}^{\perp})_{\mathbb{Q}}/{{M}^{\perp}}
,\] 
respectively, which are group homomorphisms. 
Let $M^*$ be the dual lattice of $M$.
\begin{claim}[Lemma]
\label{claim:240306-02}
\sl
\begin{itemize}
\item[\rm(1)]
\sl
The map $i_M$ is valued in $M^*$.
\item[\rm(2)]
The map $j_M$ is injective.
\item[\rm(3)]
\sl
If\/ $M$ is primitive, then the map $j_{{M}^{\perp}}$ is injective. 
\end{itemize}
\end{claim}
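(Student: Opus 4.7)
The plan is to argue each part directly from the definitions, using only that $L$ is integral, that $L_{\mathbb{Q}} = M_{\mathbb{Q}}\oplus ({M}^{\perp})_{\mathbb{Q}}$ is an orthogonal decomposition, and — for part (3) — primitivity.

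For (1), I would fix $x\in L$ and any $m\in M$. Since $x_{{M}^{\perp}}\in ({M}^{\perp})_{\mathbb{Q}}$ is orthogonal to $M_{\mathbb{Q}}$, the equality $(x_M|m) = (x|m)$ follows, and the right-hand side is an integer because $L$ is integral and $m\in L$. Hence $x_M \in M^*$.

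For (2) and (3), the key observation is the same in both cases: if one orthogonal component of $x$ happens to lie in $L$, then, since $x\in L$, so does the other. For (2), suppose $j_M([x]) = 0$; then $x_M\in M$, so $x_{{M}^{\perp}} = x - x_M\in L$, and since $x_{{M}^{\perp}}\in ({M}^{\perp})_{\mathbb{Q}}$ is orthogonal to $M$, it lies in ${{M}^{\perp}}$ by definition \eqref{eq:240401-01}. Therefore $x = x_M + x_{{M}^{\perp}}\in M + {{M}^{\perp}}$, i.e.\ $[x] = 0$. For (3), suppose $j_{{M}^{\perp}}([x]) = 0$; then $x_{{M}^{\perp}}\in {{M}^{\perp}}\subset L$, hence $x_M = x - x_{{M}^{\perp}}\in L\cap M_{\mathbb{Q}}$. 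Here one needs primitivity: $L\cap M_{\mathbb{Q}} = M$ (which is the characterisation of primitivity recalled at the beginning of Section 3), so $x_M\in M$, and again $x\in M + {{M}^{\perp}}$.

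There is no real obstacle. The only subtle point is remembering where primitivity is actually used: it is required for (3) to force the $M$-component into $M$ rather than merely into $M_{\mathbb{Q}}$, whereas (2) needs no such hypothesis because the $M^{\perp}$-component is automatically in ${{M}^{\perp}}$ once it lies in $L$. I would briefly flag this contrast in the write-up so the role of the primitivity assumption is transparent.
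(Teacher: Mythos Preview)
Your argument is correct and matches the paper's proof essentially line for line: part (1) via $(x_M|M)=(x|M)\subset\mathbb{Z}$, part (2) via $x_{{M}^{\perp}}=x-x_M\in L\cap({M}^{\perp})_{\mathbb{Q}}={M}^{\perp}$, and part (3) by the symmetric reasoning with primitivity supplying $L\cap M_{\mathbb{Q}}=M$. Your explicit remark on why primitivity is needed only in (3) is a helpful addition but does not change the approach.
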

\begin{proof}[Proof.]
For $x\in L$, we have $x_M\in M^*$ by $(x_M|M)=(x-x_{{M}^{\perp}}|M)=(x|M)\subset \mathbb{Z}
$.
The map $j_M$ is injective since if $x_M\in M$, then $x_{{M}^{\perp}}=x-x_M\in L\cap ({M}^{\perp})_{\mathbb{Q}}={{M}^{\perp}}$, thus $x=x_M+x_{{M}^{\perp}}\in M+ {{M}^{\perp}}$. 
If $M_{\mathbb{Q}}\cap L=M$, then $j_{{M}^{\perp}}$ is also injective by the same reason.
\end{proof}
\begin{claim}[Lemma]
\label{claim:240306-03}
\sl
If $M$ is primitive and $\mathrm{rank}\, {{M}^{\perp}}=1$, then $L/(M+{{M}^{\perp}})$ is cyclic.
\end{claim}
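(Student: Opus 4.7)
The plan is to combine the injection produced by the preceding lemma with the fact that every finite subgroup of $\mathbb{Q}/\mathbb{Z}$ is cyclic. Under the assumption $\mathrm{rank}\,{{M}^{\perp}}=1$, the sublattice ${{M}^{\perp}}$ is a rank-one free abelian group, so there exists $v\in L$ with ${{M}^{\perp}}=\mathbb{Z}v$, and consequently $({{M}^{\perp}})_{\mathbb{Q}}=\mathbb{Q}v$. Identifying $\mathbb{Q}v$ with $\mathbb{Q}$ via $v\mapsto 1$ gives a group isomorphism
\[
({{M}^{\perp}})_{\mathbb{Q}}/{{M}^{\perp}}\ \cong\ \mathbb{Q}/\mathbb{Z}.
\]

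Since $M$ is primitive by hypothesis, Lemma \ref{claim:240306-02}(3) asserts that the induced homomorphism
\[
j_{{{M}^{\perp}}}\colon L/(M+{{M}^{\perp}})\ \longrightarrow\ ({{M}^{\perp}})_{\mathbb{Q}}/{{M}^{\perp}}
\]
is injective. Composing with the identification above realises $L/(M+{{M}^{\perp}})$ as a subgroup of $\mathbb{Q}/\mathbb{Z}$.

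Next I would verify that this subgroup is finite: since $M$ is nondegenerate with $\mathrm{rank}\,M=n-1$ and $\mathrm{rank}\,{{M}^{\perp}}=1$, the sublattice $M+{{M}^{\perp}}\subset L$ has full rank $n$, so the quotient $L/(M+{{M}^{\perp}})$ is a finitely generated torsion abelian group, hence finite. Every finite subgroup of $\mathbb{Q}/\mathbb{Z}$ is cyclic (the subgroup of elements of order dividing $N$ in $\mathbb{Q}/\mathbb{Z}$ is $\frac{1}{N}\mathbb{Z}/\mathbb{Z}\cong\mathbb{Z}/N\mathbb{Z}$), so $L/(M+{{M}^{\perp}})$ is cyclic.

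There is no real obstacle here; the substance has been packaged into the preceding lemma, and the only additional input is the standard structural fact about finite subgroups of $\mathbb{Q}/\mathbb{Z}$. The one point worth being careful about is checking finiteness before invoking cyclicity, since $\mathbb{Q}/\mathbb{Z}$ itself is far from cyclic; this is ensured by a rank count comparing $M\oplus{{M}^{\perp}}$ with $L$.
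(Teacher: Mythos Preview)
Your proof is correct and follows essentially the same approach as the paper: apply Lemma~\ref{claim:240306-02}(3) to embed $L/(M+{{M}^{\perp}})$ into $({{M}^{\perp}})_{\mathbb{Q}}/{{M}^{\perp}}\cong\mathbb{Q}/\mathbb{Z}$ and conclude by the cyclicity of finite subgroups of $\mathbb{Q}/\mathbb{Z}$. The paper's version is terser and does not spell out the rank count for finiteness, but the argument is the same.
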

\begin{proof}[Proof.]
By Lemma \ref{claim:240306-02}\ (3), the group homomorphism $j_{{M}^{\perp}}$ is injective.
So the group $L/(M+{{M}^{\perp}})$ is cyclic as its image under the injective homomorphism $j_{{M}^{\perp}}$ is a finite subgroup of $({M}^{\perp})_{\mathbb{Q}}/{{M}^{\perp}}\cong \mathbb{Q}/\mathbb{Z}$. 
\end{proof}
\begin{claim}[Remark]
\label{claim:240316-07}
By Lemma \ref{claim:240306-02}, the group $L/(M+{{M}^{\perp}})$ is identified with a subgroup of $M^*/M$.
Hence $|L/(M+{{M}^{\perp}})|$ divides $|M^*/M|$, and if $M^*/M$ is cyclic, then $L/(M+{{M}^{\perp}})$ is also cyclic.
\end{claim}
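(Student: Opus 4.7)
The remark has two assertions to establish, and both are essentially immediate corollaries of Lemma \ref{claim:240306-02}, so my plan is to assemble those pieces carefully rather than introduce new ideas.

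First I would unpack the identification of $L/(M+{M}^{\perp})$ with a subgroup of $M^*/M$. By Lemma \ref{claim:240306-02}\,(1), the orthogonal projection $i_M\colon L\to M_{\mathbb{Q}}$ actually lands in $M^*$, so the composition $L\to M^*\to M^*/M$ is defined, and it clearly vanishes on $M+{M}^{\perp}$ (elements of $M$ project to themselves in $M$, and elements of ${M}^{\perp}$ project to $0$). Thus $i_M$ descends to a homomorphism $\bar{\imath}_M\colon L/(M+{M}^{\perp})\to M^*/M$. This is simply the map $j_M$ of Lemma \ref{claim:240306-02}\,(2) viewed with target $M^*/M\subset M_{\mathbb{Q}}/M$; its injectivity is already recorded in Lemma \ref{claim:240306-02}\,(2). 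Hence $L/(M+{M}^{\perp})$ embeds as a subgroup of $M^*/M$, which is the first assertion.

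Next, for the divisibility statement, I note that since $M$ is nondegenerate the group $M^*/M$ is finite (of order $|\det M|$), so the first assertion combined with Lagrange's theorem gives $|L/(M+{M}^{\perp})|\mid |M^*/M|$. For the final assertion, any subgroup of a finite cyclic group is cyclic, so if $M^*/M$ is cyclic then so is its subgroup $L/(M+{M}^{\perp})$.

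There is essentially no obstacle: the only point that deserves a line of explanation is why the map $j_M$ of Lemma \ref{claim:240306-02}\,(2), which was stated with codomain $M_{\mathbb{Q}}/M$, factors through $M^*/M$, and this is precisely the content of Lemma \ref{claim:240306-02}\,(1). Everything else is formal group theory. One could alternatively present the whole thing as a single sentence invoking parts (1) and (2) of Lemma \ref{claim:240306-02} together with the elementary fact that subgroups of finite cyclic groups are cyclic, but spelling it out in the two steps above makes the logical structure clearer.
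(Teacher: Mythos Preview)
Your proposal is correct and matches the paper's approach: the paper presents this remark without a separate proof, simply asserting that it follows from Lemma~\ref{claim:240306-02}, and your write-up faithfully unpacks exactly that implication (parts (1) and (2) of the lemma give the embedding into $M^*/M$, after which Lagrange and the fact that subgroups of cyclic groups are cyclic finish the job).
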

%
%
\subsection{Proof of Theorem 1}
\label{subsection:240307-01}
It remains to show that (2) implies (1).
Let $L$ be an integral lattice of rank $n$ and $M$ a sublattice satisfying (i) and (ii), that is, $M$ is a primitive sublattice of rank $n-1$ and isomorphic to the root lattice $A_{n-1}$.
We are to show $L\cong L_{k,m,n}$ for some integers $k$ and $m$. 
\begin{proof}[Proof.]
Since $M$ is nondegenerate by (ii),  $L_{\mathbb{Q}}=M_{\mathbb{Q}}\oplus ({M}^{\perp})_{\mathbb{Q}}$.
In particular, $\mathrm{rank}\, {{M}^{\perp}}=1$.
If $L=M+{{M}^{\perp}}$, then $L\cong L_{k,m,n}$ with $k=|\nu |^2$ and $m=n$ for a generator $\nu $ of ${{M}^{\perp}}$; we may and do assume $L\ne M+{{M}^{\perp}}$. 

By Lemma \ref{claim:240306-03}, the group $L/(M+{{M}^{\perp}})$ is cyclic, say of order $d\geq 2$. 
Let $\lambda$ be a representative in $L$ of a generator of $L/(M+{{M}^{\perp}})$ and $\nu $ a generator of ${{M}^{\perp}}$.
By Lemma \ref{claim:240306-02} (3), $\lambda_{{M}^{\perp}}=(c/d)\nu $ for some $c\in\mathbb{Z}$ with $\mathrm{gcd}(c,d)=1$.
Choose $a,b\in\mathbb{Z}$ such that $ac+bd=1$. 
Then $L=a\lambda+M+M^\perp$ by $\mathrm{gcd}(a,d)=1$.
Since $a\lambda_M\in M^*$ by Lemma \ref{claim:240306-02} (1), there exists $\mu \in M$ such that $a\lambda_M+\mu$ becomes one of the fundamental weights with respect to any prescribed choice of the simple roots $\alpha_1,\ldots,\alpha_{n-1}$ of $M\cong A_{n-1}$ ordered in accordance with $e_1-e_2,\ldots,e_{n-1}-e_n$ as usual.

Let $k$ be the squared norm of $\beta=a\lambda+\mu+b\nu\in L$ and $m$ the position of the simple root corresponding to the fundamental weight $\beta_M=a\lambda_M+\mu$.
Then $L\cong L_{k,m,n}$ since $\alpha_1,\ldots,\alpha_{n-1}$ and $\beta$ form a basis of $L$ and the two lattices have the same Gram matrix. 
To see that $M$ and $\beta$ generate $L$, note $d\beta_{{M}^{\perp}}=ad\lambda_{{M}^{\perp}}+bd\nu =ac\nu +bd\nu =\nu $.
Since $d\beta_{M}=ad\lambda_{M}+d\mu\in M$, we have $d\beta_{{M}^{\perp}}=d\beta-d\beta_{M}\in \mathbb{Z}\beta+M$. 
So ${{M}^{\perp}}=\mathbb{Z}\nu =\mathbb{Z}d\beta_{{M}^{\perp}}\subset \mathbb{Z}\beta+M$, thus $L=\mathbb{Z}\beta+M+{{M}^{\perp}}=\mathbb{Z}\beta+M$.
\end{proof}
\begin{claim}[Remark]
\label{claim:240306-04}
By the same proof as above, the following statement holds true. 
Let $L$ be an integral lattice of rank $n$, $M$ a primitive sublattice of rank $n-1$ and $\alpha_1,\ldots,\alpha_{n-1}$ a basis of $M$.
If $M$ is nondegenerate, then there exists an element $\beta\in L$ such that $L=\mathbb{Z}\beta+M$, $\beta_M\in M^*$, and $(\beta_M|\alpha_i)=\delta_{i,m}$ ($1\leq i\leq n-1$) for some $m$ with $1\leq m\leq n$.
We note that if $M$ is a root lattice, $\alpha_1,\ldots,\alpha_{n-1}$ the simple roots,  and $1\leq m\leq n-1$, then $\beta_M$ is a minuscule weight. (For $A_{n-1}$, every fundamental weight is minuscule.)
\end{claim}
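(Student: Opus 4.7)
The plan is to reproduce the structure of the proof of Theorem 1 with the prescribed basis $\alpha_1,\ldots,\alpha_{n-1}$ of $M$ playing the role of the simple roots of $A_{n-1}$, and its dual basis $\omega_1,\ldots,\omega_{n-1}$ in $M^*$ (characterised by $(\omega_i|\alpha_j)=\delta_{ij}$) playing the role of the fundamental weights. Note that nondegeneracy of $M$ gives the orthogonal decomposition $L_{\mathbb Q}=M_{\mathbb Q}\oplus(M^{\perp})_{\mathbb Q}$, and a direct check shows that $\omega_1,\ldots,\omega_{n-1}$ is automatically a $\mathbb Z$-basis of $M^*$. The trivial case is $L=M+M^{\perp}$: here take $\beta$ to be a generator $\nu$ of the rank-one lattice $M^{\perp}$, so that $L=\mathbb Z\beta+M$, $\beta_M=0$, and $(\beta_M|\alpha_i)=0=\delta_{i,n}$, giving the conclusion with $m=n$.

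In the remaining case $L\neq M+M^{\perp}$, the quotient $L/(M+M^{\perp})$ is cyclic of some order $d\geq 2$ by Lemma \ref{claim:240306-03}. Following the Theorem 1 setup verbatim, I would choose a representative $\lambda\in L$ of a generator, a generator $\nu$ of $M^{\perp}$, write $\lambda_{M^{\perp}}=(c/d)\nu$ with $\gcd(c,d)=1$, and pick $a,b\in\mathbb Z$ with $ac+bd=1$. For any $\mu\in M$, putting $\beta:=a\lambda+\mu+b\nu$ gives $\beta_{M^{\perp}}=(ac+bd)\nu/d=\nu/d$, so $d\beta_{M^{\perp}}=\nu$ lies in $\mathbb Z\beta+M$, hence $M^{\perp}\subset\mathbb Z\beta+M$; combined with $\gcd(a,d)=1$, which forces $L=a\lambda+M+M^{\perp}$, this yields $L=\mathbb Z\beta+M$ regardless of the subsequent choice of $\mu$.

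The remaining and sole new point is to choose $\mu\in M$ so that $\beta_M=a\lambda_M+\mu$ coincides with one of the prescribed dual basis vectors $\omega_1,\ldots,\omega_{n-1}$. Since $a\lambda_M\in M^*$ by Lemma \ref{claim:240306-02}(1), this is equivalent to finding $m\in\{1,\ldots,n-1\}$ with $a\lambda_M\equiv\omega_m\pmod{M}$; once such an $m$ is secured, $\mu:=\omega_m-a\lambda_M\in M$ does the job. As $a$ varies over integers coprime to $d$, the class $a\lambda_M+M$ ranges over the generators of the cyclic subgroup $i_M(L)/M\subset M^*/M$, so the matter reduces to verifying that some generator of this cyclic subgroup coincides with some $\omega_m+M$. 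This residue-matching step is the main obstacle: for $M\cong A_{n-1}$ it is automatic, since $M^*/M\cong\mathbb Z/n$ is cyclic and the classes $\omega_1+M,\ldots,\omega_{n-1}+M$ already exhaust all of its nonzero residues, which is precisely what the proof of Theorem 1 exploits; in the broader root-lattice setting this is the content of the final sentence of the remark, where the resulting $\omega_m$ is recognised as a minuscule weight.
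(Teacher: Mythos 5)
Your construction follows the paper's intended route---the paper itself justifies the remark only by the words ``by the same proof as above''---and the mechanical parts (the case $L=M+{M}^{\perp}$, the choice of $\lambda,\nu,c,d,a,b$, and the verification that $L=\mathbb{Z}\beta+M$ for $\beta=a\lambda+\mu+b\nu$) are carried over correctly. But your proof stops at what you yourself call ``the main obstacle'': you never establish that an admissible class $a\lambda_M+M$ equals some $\omega_m+M$, and deferring this to the final sentence of the remark cannot work, since that sentence only records a property of the output ($\beta_M$ is minuscule when $M$ is a root lattice with simple-root basis); it neither asserts nor proves the needed matching. Moreover, the gap is not closable in the stated generality. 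Take $L=A_3$ (latitude-zero vectors in $\mathbb{Z}^4$) and $M$ the primitive nondegenerate rank-$2$ sublattice with prescribed basis $\alpha_1=e_1-e_2$, $\alpha_2=e_3-e_4$, so $M\cong A_1\oplus A_1$. Then $L/M\cong\mathbb{Z}$ is generated by the class of $e_2-e_3$, so any $\beta$ with $L=\mathbb{Z}\beta+M$ satisfies $\beta\equiv\pm(e_2-e_3)\pmod{M}$; since the Gram matrix of $M$ is $2I_2$, both $(\beta_M|\alpha_1)$ and $(\beta_M|\alpha_2)$ are odd, whereas $(\beta_M|\alpha_i)=\delta_{i,m}$ requires at least one of them to vanish. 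So the matching step---and with it the remark as literally stated---fails for general $M$ with a prescribed basis. What the proof of Theorem \ref{claim:240316-06} really uses is the special feature of $A_{n-1}$ that every nonzero class of $M^*/M\cong\mathbb{Z}/n\mathbb{Z}$ is represented by a fundamental weight; this persists for irreducible root lattices (every nonzero class of $M^*/M$ contains a unique minuscule fundamental weight), but fails already for the reducible root lattice $A_1\oplus A_1$ above, so any correct proof must restrict the hypothesis accordingly rather than reproduce the $A_{n-1}$ argument verbatim.

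There is also a quantitative slip in your reduction. You argue that as $a$ runs over the units modulo $d$ the class $a\lambda_M+M$ runs over all generators of the cyclic subgroup $\langle\lambda_M+M\rangle$, so that it suffices to match some generator of this subgroup with some $\omega_m+M$. But $a$ is not free: $L=\mathbb{Z}\beta+M$ forces the class of $\beta$ to generate $L/M\cong\mathbb{Z}$, which amounts to $ac+bd=\pm1$, i.e.\ $a\equiv\pm c^{-1}\pmod{d}$. Hence only the two classes $\pm g_M+M$, with $g$ a generator of $L/M$, can occur as $\beta_M+M$, and the matching requirement is even more rigid than your reduction suggests. For $M\cong A_{n-1}$ this rigidity is harmless, because every nonzero class of $M^*/M$ is a fundamental-weight class---which is exactly why the proof of Theorem \ref{claim:240316-06} goes through there.
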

%
\section{Examples and applications}
\label{section:ExamplesAndApplications}
Let $k,m,n$ be integers with $m\ne 0$ and $n\geq 1$.
In classifying the lattices $L_{k,m,n}$, we may assume without loss of generality that the parameters satisfy $1\leq m\leq n$ by Remark \ref{claim:240324-02} (2). 
If $1\leq m\leq n-1$, we may further assume $m\leq n-m$.
%
\subsection{Root lattices}
\label{subsection:RootLattices}
By a {\it root lattice}, we mean a positive-definite lattice generated by its roots, the elements of squared norm $2$. 
\begin{claim}[Lemma]
\label{claim:240315-02}
\sl
Let $k,m,n$ be integers with $1\leq m\leq n$.
Then $L_{k,m,n}$ is a root lattice only if $k=2$.
\end{claim}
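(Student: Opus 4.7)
Proof plan. The approach is to pin down $k$ to a positive even integer with $k \ge 2$, to suppose $k \ge 4$ for contradiction, to use an inequality on integer entries to rule out roots of latitude $\pm m$, and then to invoke the classification of root systems to complete the argument.

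First I would note that $\beta = e_1 + \cdots + e_m$ lies in $L = L_{k,m,n}$ (since $1 \le m \le n$ ensures $\mathrm{lat}(\beta) = m$) and has $|\beta|^2 = k$ by \eqref{eq:240305-01}. Since every root lattice is positive-definite and even with minimum nonzero squared norm $2$, $k$ must be a positive even integer with $k \ge 2$.

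Suppose for contradiction that $k \ge 4$. The base lattice $M = L[0] \cong A_{n-1}$ has rank $n-1$, so roots of $L$ lying in $M$ generate only $M$; since $L$ is generated by its roots, some root $r$ of $L$ must have nonzero latitude $\mathrm{lat}(r) = ms$ for some $s \in \mathbb{Z} \setminus \{0\}$. The identity $|r|^2 = r \cdot r + s^2(k-m)$ together with $|r|^2 = 2$ yields $r \cdot r = 2 + s^2(m-k)$. Since the entries of $r$ are integers we have $|r_i| \le r_i^2$, so
\[
|s|\, m = \Bigl|\sum_i r_i\Bigr| \le \sum_i |r_i| \le \sum_i r_i^2 = r \cdot r = 2 + s^2(m-k),
\]
which rearranges to $s^2 k \le 2 + m|s|(|s|-1)$. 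Taking $|s| = 1$ forces $k \le 2$, contradicting $k \ge 4$. Hence every root of $L$ of nonzero latitude has $|s| \ge 2$.

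The hardest step is completing the contradiction, since the greatest common divisor of the $|s|$-values of all roots could in principle equal $1$ even when each individual $|s|$ is at least $2$ (for instance via coexisting $|s| = 2$ and $|s| = 3$ roots). My plan is to invoke the classification of simply-laced root systems: the root system $\Phi$ of $L$ contains the $A_{n-1}$ root system of $M$ as a sub-root-system of corank $1$, and so by classification $\Phi$ must be one of $A_n$, $D_n$, $E_6$, $E_7$, $E_8$, or the reducible $A_{n-1} \oplus A_1$. In each of these, a direct inspection of the quotient $L/M \cong \mathbb{Z}$ shows that some root has $|s| = 1$, contradicting the conclusion of the previous paragraph. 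Therefore $k = 2$.
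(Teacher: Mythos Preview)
Your opening is the same as the paper's: you use the integer inequality $|r_i|\le r_i^2$ to show that elements of latitude $\pm m$ have squared norm at least $k$, hence no root has $|s|=1$ when $k\ge 4$. The paper states this as ``if $x\in L[m]$, then $|x|^2\ge k$'', which is exactly your inequality at $|s|=1$.

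The endgames diverge. The paper takes a root $\alpha$ of latitude $h\ge 2m$, forms the \emph{proper} full root sublattice $N=M+\mathbb{Z}\alpha$ of $L$, and invokes a result from Martinet to conclude that a root lattice admitting such a proper full root sublattice containing $A_{n-1}$ must be $E_6$, $E_7$, or $E_8$; then the determinant bound of Remark~\ref{claim:240315-01} immediately gives $k\le 2$. Your plan instead classifies $L$ among $\{A_n,D_n,E_6,E_7,E_8,A_{n-1}\oplus A_1\}$ and asserts by ``direct inspection'' that each has a root with $|s|=1$.

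There is a genuine gap in that last step. Knowing the abstract isomorphism type of $L$ does not by itself determine the quotient map $L\to L/M\cong\mathbb{Z}$; you must also control the embedding of $M$. It is true that $M$, being generated by norm-$2$ vectors, is spanned by roots of $L$ and hence corresponds to an $A_{n-1}$ root subsystem, but you then need that all primitive $A_{n-1}$ root sublattices of each of $A_n,D_n,E_6,E_7,E_8$ are conjugate under the automorphism group (or else check every conjugacy class separately; already in $D_4$ there is more than one $W(D_4)$-orbit of $A_3$ subsystems). That uniqueness-up-to-automorphism statement is exactly the kind of classification hidden behind the paper's citation to Martinet, so your route is not more elementary. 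Once you fill this in, your argument works, but the paper's finish is shorter: it cuts the list down to the three exceptional cases and then reads off $k\le 2$ from the determinant without ever needing to locate a specific root.
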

\begin{proof}
Assume that $L=L_{k,m,n}$ is a root lattice and let $M=L[0]$ be the base lattice.
Since $1\leq m\leq n$, we must have $k=|e_1+\cdots+e_m|^2\geq 2$.
Assume $k\geq 4$. 
If $x\in L[m]$, then $|x|^2\geq k\geq 4$.
Therefore, $L[m]$ has no root. 
Since $L$ is generated by roots, there exists $h>m$ such that $L[h]$ has a root $\alpha$.
Then the full sublattice $M+\mathbb{Z}\alpha$ is a root lattice containing $A_{n-1}$.
Such a sublattice exists only when $L$ is of type $E_n$ with $n=6,7,8$ (cf.\ \cite{Martinet}), for which only $k=2$ is allowed by Remark \ref{claim:240315-01}. 
\end{proof}
The parameters for which $L_{k,m,n}$ becomes a root lattice are classified by the following proposition under the assumption $1\leq m\leq n-1$, while $L_{2,n,n}=A_{n-1}\oplus A_1$ for $m=n$. 
\begin{claim}[Proposition]
\sl
Let $k,m,n$ be integers with $1\leq m\leq n-m$.
Then $L_{k,m,n}$ is a root lattice if and only if $k=2$ and one of the following conditions is satisfied:
\begin{itemize}
\item[\rm(1)]
$m=1$ and\/ $2\leq n$.
\ \rm(2)\hskip\labelsep
$m=2$ and\/ $4\leq n$.
\ \rm(3)\hskip\labelsep
$m=3$ and\/ $6\leq n\leq 8$.
\end{itemize}
\end{claim}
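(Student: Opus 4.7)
The plan is as follows. By Lemma~\ref{claim:240315-02}, if $L_{k,m,n}$ is a root lattice then $k=2$, so we may restrict attention to the case $k=2$. Under the standing hypothesis $1\leq m\leq n-m$ we have $n\geq 2$ and $m\leq n-1$, so the basis $e_2-e_1,\ldots,e_n-e_{n-1},e_1+\cdots+e_m$ of $L_{2,m,n}$ exhibited in Section~\ref{section:Introduction} is available, and with $k=2$ every one of its members has squared norm $2$. Consequently, as soon as $L_{2,m,n}$ is positive-definite, it is generated by elements of squared norm $2$ and hence is a root lattice; conversely, any root lattice is by definition positive-definite.

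The task thus reduces to deciding when $L_{2,m,n}$ is positive-definite, which by the signature criterion of Subsection~\ref{subsection:Determinants} is equivalent to $\det L_{2,m,n}=m^2-mn+2n>0$. A short case analysis on $m$ under the constraint $n\geq 2m$ then yields:
\begin{itemize}
\item for $m=1$ and $m=2$, one has $\det L_{2,m,n}=n+1$ and $4$ respectively, which are always positive;
\item for $m=3$, $\det L_{2,m,n}=9-n>0$ exactly when $n\leq 8$, so combined with $n\geq 6$ this gives $n\in\{6,7,8\}$;
\item for $m\geq 4$, $n\geq 2m$ forces $m^2-(m-2)n\leq m^2-2m(m-2)=-m(m-4)\leq 0$, so $L_{2,m,n}$ is not positive-definite.
\end{itemize}
These are exactly the cases (1)--(3). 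For the converse, the identifications $L_{2,1,n}=A_n$, $L_{2,2,n}=D_n$, and $L_{2,3,n}=E_n$ for $n\in\{6,7,8\}$ recorded in~\eqref{eq:240312-01} of the Introduction confirm that $L_{2,m,n}$ is indeed a root lattice in each of the listed cases.

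The main obstacle is conceptual rather than computational: the key realisation is that $k=2$ already forces the entire distinguished basis to consist of roots, so that positive-definiteness alone is enough to conclude that $L_{2,m,n}$ is a root lattice. Once this reduction is in hand, the remaining inequality arithmetic is routine.
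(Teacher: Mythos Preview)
Your argument is correct and follows essentially the same route as the paper's proof: invoke Lemma~\ref{claim:240315-02} to force $k=2$, observe via the diagram in Section~\ref{section:Introduction} that the distinguished basis of $L_{2,m,n}$ then consists entirely of norm-$2$ vectors, and reduce the question to positivity of $\det L_{2,m,n}=m^2-mn+2n$. The paper leaves the determinant case analysis implicit, whereas you spell it out; your unified formulation (``root lattice $\Leftrightarrow$ positive-definite'' once $k=2$) is a slight streamlining, but the substance is the same.
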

\begin{proof}
By Lemma \ref{claim:240315-02}, the condition $k=2$ is necessary. 
It is easy to check `if part' by consulting the diagram in Sect.\ 1.
The proof of `only if' part is also easy by positivity of $\det L_{2,m,n}=m^2-mn+2n$.
\end{proof}
The three cases correspond to $A_n$, $D_n$, and $E_n$, respectively, as mentioned in Sect.\ 1.
See Appendix for the lists of roots.

\begin{claim}[Note]
\label{claim:240318-03}
\begin{enumerate}
\item[(1)]
For $L_{2,3,n}=E_n$ ($n=6,7,8$), choose the simple roots $e_{1}-e_{2},\ldots,e_{n-1}-e_{n},-e_1-e_2-e_3$.
Recall the dual basis $e^1,\ldots,e^n$ of $e_1,\ldots,e_n$ and the element $e^0=(1/3)(e^1+\cdots+e^n)$ in Subsect.\ \ref{subsection:240318-01}. 
Then
\[
(e^i|e^j)
=
\delta_{i,j}+\frac{1}{9-n}\ \ (1\leq i,j\leq n)
,\]
and the simple roots are expressed as
$
e^{1}-e^{2},\ldots,e^{n-1}-e^{n},\ e^0-e^1-e^2-e^3
$
in agreement with the description of $E_6,E_7,E_8$ by Shioda \cite{Shioda} (cf.\ \cite{Manin},\cite{SchuttShioda}) under identification of $e^0,e^1,\ldots,e^n$ with $v_0,u_1,\ldots,u_n$ in Ref.\ \cite{Shioda}.
\item[(2)]
Let $B_n,C_n,F_n$, and $G_n$ denote the lattices generated by the root systems of the corresponding types, respectively, normalised so that the long roots have squared norm $2$.
These lattices are also realised as $L_{k,m,n}$ up to rescaling: $B_n=L_{1,1,n}=\mathbb{Z}^n$, $\sqrt{2}C_n=L_{4,1,n}$, $\sqrt{2}F_4=L_{2,2,4}=D_4$, and $\sqrt{3}G_{2}=L_{2,1,2}=A_{2}$. 
\end{enumerate}
\end{claim}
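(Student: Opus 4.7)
For part (1), my plan is to invoke the formula $(e^i|e^j)=\delta_{i,j}+(m-k)/\det L_{k,m,n}$ established in Subsect.\ \ref{subsection:240318-01}. With $(k,m)=(2,3)$ one has $m-k=1$ and $\det L_{2,3,n}=9-n$ by the formula of Subsect.\ \ref{subsection:Determinants}, giving the displayed identity at once. Next I would write the original basis in terms of the dual basis via $e_i=\sum_j(e_i|e_j)e^j=e^i-(1/9)(e^1+\cdots+e^n)$, which follows from $(e_i|e_j)=\delta_{i,j}+(k-m)/m^2=\delta_{i,j}-1/9$. Subtracting two such expressions kills the common correction term, so $e_i-e_{i+1}=e^i-e^{i+1}$ for $1\leq i\leq n-1$; summing three of them and using $e^0=(1/3)(e^1+\cdots+e^n)$ yields $e_1+e_2+e_3=(e^1+e^2+e^3)-e^0$, whence $-(e_1+e_2+e_3)=e^0-e^1-e^2-e^3$. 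The agreement with Shioda's description is then simply a relabelling of $e^0,e^1,\ldots,e^n$ as $v_0,u_1,\ldots,u_n$.

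For part (2), the plan is to verify each identification by comparing Gram matrices in convenient bases. The equality $B_n=L_{1,1,n}=\mathbb{Z}^n$ is immediate: when $k=m=1$ the matrix $B_{1,1,n}$ reduces to $I_n$ and the latitude condition is vacuous, so $L_{1,1,n}$ is $\mathbb{Z}^n$ with the Euclidean form, which is the $B_n$ root lattice. The equality $L_{2,2,4}=D_4$ is recorded in Sect.\ \ref{section:Introduction}, and the identification $\sqrt{2}F_4\cong D_4$ can be produced by taking the $\mathbb{Z}$-basis $e_1,e_2,e_3,(1/2)(e_1+e_2+e_3+e_4)$ of $F_4$, doubling the resulting Gram matrix (since scaling by $\sqrt{2}$ doubles the bilinear form), and exhibiting an integral change of basis that brings it into a $D_4$ Gram matrix. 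Likewise, $L_{2,1,2}=A_2$ by the formula $L_{2,1,n}=A_n$ of Sect.\ \ref{section:Introduction}, and $\sqrt{3}G_2\cong A_2$ follows because the short roots of $G_2$, rescaled by $\sqrt{3}$ so as to acquire squared norm $2$, generate a copy of $A_2$. Finally, for $\sqrt{2}C_n=L_{4,1,n}$ the plan is to use the basis of Sect.\ \ref{section:Introduction} specialised to $(k,m)=(4,1)$, yielding a Gram matrix with diagonal $(4,2,\ldots,2)$ and $-1$'s on the sub- and super-diagonals, and to match it, by an integral change of basis, with the Gram matrix of the $C_n$ root lattice rescaled by $\sqrt{2}$.

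The main source of tedium lies in part (2), where each rescaling factor has to be tracked carefully and the explicit integral basis changes must be produced to witness the abstract Gram-matrix equivalences; none of the steps presents a conceptual obstacle, however, since after normalization each identification reduces to a finite-dimensional matrix check.
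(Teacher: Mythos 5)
Your part (1) is correct and is clearly the intended computation: the displayed identity follows from the formula $(e^i|e^j)=\delta_{i,j}+(m-k)/\det L_{k,m,n}$ of Subsect.~\ref{subsection:240318-01} together with $\det L_{2,3,n}=9-n$, and inverting the dual-basis relation gives $e_i=e^i-\tfrac19(e^1+\cdots+e^n)$, whence $e_i-e_{i+1}=e^i-e^{i+1}$ and $-(e_1+e_2+e_3)=e^0-e^1-e^2-e^3$. (The paper states the claim as a Note, without proof, so there is no argument of the authors to compare against.) Within part (2), your checks of $B_n=L_{1,1,n}=\mathbb{Z}^n$, of $\sqrt{2}F_4\cong D_4$ via the lattice $D_4^*$ generated by the $F_4$ roots, and of $\sqrt{3}G_2\cong A_2$ via the short roots are sound, even though the two basis changes are deferred rather than exhibited.

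The genuine gap is the $C_n$ item. Under the realisation you are using (long roots $\pm 2e_i$, short roots $\pm e_i\pm e_j$, rescaled so that the long roots have squared norm $2$), the lattice generated by the $C_n$ root system is $\tfrac{1}{\sqrt{2}}D_n$, so $\sqrt{2}\,C_n\cong D_n=L_{2,2,n}$, which has determinant $4$; on the other hand $\det L_{4,1,n}=m^2-mn+kn=3n+1$. An integral change of basis preserves the determinant of a Gram matrix, so for $n\geq 2$ the matching you defer to (``match it, by an integral change of basis, with the Gram matrix of the $C_n$ root lattice rescaled by $\sqrt{2}$'') does not exist. Concretely, in the simple-root basis the Gram matrix of $\sqrt{2}\,C_n$ is tridiagonal with diagonal $(2,\ldots,2,4)$ but with last off-diagonal entry $-2$, not $-1$: the double bond of the $C_n$ diagram forces $(\alpha_{n-1}|\alpha_n)=-2$ once $|\alpha_n|^2=4$. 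This is inequivalent over $\mathbb{Z}$ to $G_{4,1,n}$, whose off-diagonal entries are all $-1$. Indeed no rescaling of $L_{4,1,n}$ is generated by a root system at all: in $\tfrac{1}{\sqrt{2}}L_{4,1,n}$ the norm-$2$ vectors $\tfrac{1}{\sqrt{2}}e_i$ have pairwise inner product $\tfrac32$, which is impossible for roots. So your closing assertion that every identification ``reduces to a finite-dimensional matrix check'' with no conceptual obstacle is exactly where the proof breaks down: for $C_n$ the check fails, and a determinant computation already shows it must. A correct treatment has to either identify a (necessarily nonstandard) reading of the symbol $C_n$ under which $\sqrt{2}\,C_n=L_{4,1,n}$ can hold, or record that, under the standard conventions that you (reasonably) adopted, this particular identification in the Note cannot be verified as stated; flagging that discrepancy, rather than promising a routine verification, is what the blind proof needed to do here.
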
%
%
%
\subsection{Unimodular lattices}
\label{subsection:UnimodularLattices}
Recall that a lattice is {\it unimodular}\/ if and only if its determinant is $\pm1$. 
For $L=L_{k,m,n}$, it is the case when $m^2-mn+kn=\pm1$, and such parameters are classified by Lemma \ref{claim:240310-01} with $d=\pm1$.

Let us restrict our attention to positive-definite lattices and assume $1\leq m\leq n$.
Then $k$ is necessarily positive, and $L=L_{1,1,1}=\mathbb{Z}$ if $m=n$.
Therefore, it suffices to consider the case with $1\leq m\leq n-1$, whence the integers $p,q$ in Lemma \ref{claim:240310-01} are nonnegative for $d=1$.
The classification is now achieved by the following.
\begin{claim}[Proposition]
\label{claim:240316-02}
\sl
Let $k,m,n$ be integers with $1\leq m\leq n-1$.
Then $L_{k,m,n}$ is positive-definite and unimodular if and only if there exist nonnegative integers $p,q$ satisfying $k^2-1=pq$, $m=k+p$, and $n=2k+p+q$. 
\end{claim}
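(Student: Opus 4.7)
The plan is to derive the proposition as an almost immediate consequence of Lemma \ref{claim:240310-01} specialised to $d = 1$, combined with the determinant formula $\det L_{k,m,n} = m^2 - mn + kn$ and the positive-definiteness criterion recalled in Subsection~\ref{subsection:Determinants}. The only substantive point is to pin down the signs of the integers $p,q$ supplied by the lemma; everything else is bookkeeping.

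For the implication ($\Leftarrow$), nonnegative integers $p,q$ satisfying $k^2 - 1 = pq$, $m = k+p$, and $n = 2k + p + q$ yield $m^2 - mn + kn = 1$ by Lemma \ref{claim:240310-01}, so that $\det L_{k,m,n} = 1 > 0$; hence $L_{k,m,n}$ is positive-definite and unimodular. Conversely, positive-definiteness combined with unimodularity forces $\det L_{k,m,n} = 1$, i.e., $m^2 - mn + kn = 1$, and Lemma \ref{claim:240310-01} then supplies integers $p,q$ with $pq = k^2 - 1$; under the prescribed relations $m = k+p$ and $n = 2k+p+q$ they are uniquely determined as $p = m - k$ and $q = n - m - k$.

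It remains to verify $p, q \geq 0$. I would obtain this by rearranging the determinant identity $m^2 - mn + kn = 1$ into the two forms $np = n(m-k) = m^2 - 1$ and $nq = n(n-m-k) = (n-m)^2 - 1$. Since $n \geq 2$, $m \geq 1$, and $n - m \geq 1$, both right-hand sides are nonnegative, whence $p, q \geq 0$. The main obstacle, though slight, is exactly this sign determination; once it is disposed of by the two rearrangements above, the equivalence falls straight out of Lemma \ref{claim:240310-01}.
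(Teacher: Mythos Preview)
Your proof is correct and follows the same route as the paper: invoke Lemma~\ref{claim:240310-01} with $d=1$ and then check the signs of $p$ and $q$. The paper itself does not spell out a proof of the proposition; it simply asserts in the preceding paragraph that ``the integers $p,q$ in Lemma~\ref{claim:240310-01} are nonnegative for $d=1$'' under the assumption $1\le m\le n-1$. Your rearrangements $np=m^2-1$ and $nq=(n-m)^2-1$ make that assertion transparent, so your argument is in fact slightly more explicit than what appears in the text.
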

\begin{claim}[Remark]
\begin{enumerate}
\item[(1)]
The isomorphism $L_{k,m,n}\cong L_{k,n-m,n}$ of opposite lattices corresponds to switching of $p$ and $q$ in Proposition \ref{claim:240316-02}.
\item[(2)]
Assume $1\leq m\leq n-m$.
For $k=1$, the lattice $L_{k,m,n}$ is positive-definite only when $m=1$, and the lattice $L_{1,1,n}\cong \mathbb{Z}^n$ is unimodular. 
For $k\geq 2$, the integers $p,q$ in Proposition \ref{claim:240316-02} must be positive.
\end{enumerate}
\end{claim}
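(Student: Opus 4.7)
The plan is to verify each item of the remark directly by invoking Proposition \ref{claim:240316-02}, the opposite-lattice isomorphism of Subsect.\ \ref{subsection:Opposite}, and the signature criterion from Subsect.\ \ref{subsection:Determinants}; the computations required are entirely elementary.

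For item (1), I start from a unimodular $L_{k,m,n}$ whose parameters $(p,q)$ satisfy $m=k+p$, $n=2k+p+q$, and $pq=k^2-1$, and apply Proposition \ref{claim:240316-02} to the opposite lattice $L_{k,n-m,n}$. Its associated parameters $(p',q')$ are forced by $n-m=k+p'$ and $n=2k+p'+q'$, which solve to $p'=n-m-k=q$ and $q'=n-2k-p'=p$. The identity $p'q'=qp=k^2-1$ is then automatic, confirming that the isomorphism corresponds precisely to swapping $p$ and $q$.

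For item (2), the case $k=1$ requires positive-definiteness, which by Proposition 2.1 amounts to $m^2-mn+n>0$. For $m=1$ this reduces to $1>0$, and direct inspection shows $L_{1,1,n}=\mathbb{Z}^n$ with Gram matrix $I_n$, hence positive-definite and unimodular. For $m\ge 2$, positive-definiteness would require $n<m^2/(m-1)$, but the constraint $m\le n-m$ gives $n\ge 2m$, and the inequality $2m\ge m^2/(m-1)$ (equivalent to $m(m-2)\ge 0$, valid for $m\ge 2$) rules out this branch. For $k\ge 2$, if either $p$ or $q$ vanished then $pq=k^2-1=0$ would force $k=1$, contradicting $k\ge 2$, so both must be strictly positive. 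No substantial obstacle arises; the only step worth flagging is the arithmetic chain $n\ge 2m\ge m^2/(m-1)$ for $m\ge 2$, which is what eliminates the $m\ge 2$ possibility in the $k=1$ positive-definite analysis.
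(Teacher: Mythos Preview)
Your argument is correct. The paper states this Remark without proof, so there is no paper proof to compare against; your elementary verification via Proposition~\ref{claim:240316-02}, the signature criterion of Subsect.~\ref{subsection:Determinants}, and the opposite-lattice isomorphism is exactly what is needed, and the arithmetic (in particular the chain $n\ge 2m\ge m^2/(m-1)$ for $m\ge 2$) is sound.
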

Here are a couple of instances when the lattice $L_{k,m,n}$ becomes positive-definite and unimodular. It is even if and only if $k$ is even. 
\begin{claim}[Example]
\label{claim:240318-04}
\begin{enumerate}
\item[(1)]
The lattice $L_{k,m,n}$ with $m=k+1$ and $n=k^2+2k$ is positive-definite and unimodular for all $k\geq 1$.
It contains a sublattice of type $A_{n}$ formed by the simple roots 
$e_1-e_2,\ldots,e_{n-1}-e_{n},e_1+\cdots+e_{n-1}+2e_n$. 
For $k=2$, the lattice $L_{2,3,8}$ is isomorphic to $E_8$ and it contains a sublattice of type $A_8$.
For $k=4$, the lattice $L_{4,5,24}$ is isomorphic to the Niemeier lattice $N(A_{24})$ of type $A_{24}$.
The lattice $L_{k,k+1,k^2+2k+1}$ is degenerate and $L_{k,k+1,k^2+2k+2}$ is hyperbolic and unimodular. 
\item[(2)]
The lattice $L_{k,m,n}$ with $m=2k-1$ and $n=4k$ is positive-definite and unimodular for all $k\geq 1$.
As it contains a sublattice of type $D_{n}$ formed by the simple roots 
$e_2-e_1,\ldots,e_{n}-e_{n-1},e_1+\cdots+e_{n-2}$, the lattice is isomorphic to $D^+_{n}$. 
For $k=2$, the lattice $L_{2,3,8}$ is isomorphic to $E_8$ and it contains a sublattice of type $D_8$.
For $k=6$, the lattice $L_{6,11,24}\cong D^+_{24}$ is the Niemeier lattice $N(D_{24})$ of type $D_{24}$. 
\end{enumerate}
\end{claim}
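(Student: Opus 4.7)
The plan is to prove both parts by combining Proposition~\ref{claim:240316-02} with direct computations using the norm formula \eqref{eq:240305-01}, and to identify the distinguished named lattices via Mordell's uniqueness theorem in rank~$8$ and Niemeier's classification in rank~$24$. For unimodularity I would apply Proposition~\ref{claim:240316-02} with specific choices of $(p,q)$: in (1), $(p,q)=(1,k^2-1)$ gives $pq=k^2-1$, $m=k+p=k+1$, and $n=2k+p+q=k^2+2k$, so $L_{k,k+1,k^2+2k}$ is positive-definite and unimodular; in (2), $(p,q)=(k-1,k+1)$ gives $pq=k^2-1$, $m=k+p=2k-1$, and $n=2k+p+q=4k$, so $L_{k,2k-1,4k}$ is positive-definite and unimodular. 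Evenness in each case holds iff $k$ is even, as noted after the display of $G_{k,m,n}$ in Section~\ref{section:Introduction}.

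For the root sublattices I would verify that the listed vectors belong to $L_{k,m,n}$ (via latitude divisibility), have squared norm~$2$ (via \eqref{eq:240305-01}), and exhibit the claimed inner-product pattern. In (1), the vector $v=e_1+\cdots+e_{n-1}+2e_n$ has latitude $n+1=(k+1)^2=(k+1)m$, so it lies in $L_{k,m,n}$; \eqref{eq:240305-01} gives $|v|^2=(n+3)-(k+1)^2=2$; and its inner product with $e_i-e_{i+1}$ reduces to the Euclidean one and equals $-\delta_{i,n-1}$, which together with the $A_{n-1}$-chain $e_1-e_2,\ldots,e_{n-1}-e_n$ produces the $A_n$ diagram. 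In (2), the vector $w=e_1+\cdots+e_{n-2}$ has latitude $n-2=2m$, and \eqref{eq:240305-01} gives $|w|^2=(n-2)+4(k-m)=2$; its inner product with $e_{i+1}-e_i$ equals $-\delta_{i,n-2}$, producing the $D_n$ diagram. To identify $L_{k,2k-1,4k}$ with $D_n^+$, comparison of determinants gives $[L:D_n]=2$; since $e_1\notin L$ when $k\geq 2$ (its latitude $1$ is not divisible by $m=2k-1$), $L\ne\mathbb{Z}^n$, and among integral index-$2$ overlattices of $D_n$ this leaves $D_n^+$. For $k=1$ the lattice collapses to $L_{1,1,4}=\mathbb{Z}^4\cong D_4^+$.

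For the named lattices, the case $k=2$ in both (1) and (2) gives a rank-$8$ positive-definite even unimodular lattice, necessarily $E_8$. For $k=4$ in (1), $L=L_{4,5,24}$ is a rank-$24$ positive-definite even unimodular lattice containing $A_{24}$; since no simply-laced rank-$24$ root system properly extends $A_{24}$, the root sublattice of $L$ equals $A_{24}$, and Niemeier's classification forces $L\cong N(A_{24})$. The same reasoning with $D_{24}$ in place of $A_{24}$ identifies $L_{6,11,24}$ with $N(D_{24})$. Finally, the signature remarks in (1) follow from $\det L_{k,k+1,n}=(k+1)^2-n$: this vanishes for $n=k^2+2k+1$ (signature $(n-1,0,1)$, degenerate) and equals $-1$ for $n=k^2+2k+2$ (signature $(n-1,1,0)$, hyperbolic and unimodular), both by the signature proposition in Subsection~\ref{subsection:Determinants}.

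The main obstacle I expect is the Niemeier-type identifications: one must invoke the rigidity of Niemeier's classification and verify that the exhibited $A_{24}$ (respectively $D_{24}$) sublattice is the full root sublattice of $L$. The rank-saturation argument sketched above handles this, but it is the least routine step of the proof.
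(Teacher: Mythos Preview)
The paper states this Example without proof; your verification correctly supplies the details, and your use of Proposition~\ref{claim:240316-02} with $(p,q)=(1,k^2-1)$ and $(p,q)=(k-1,k+1)$, the norm computations for the extra roots, and the appeal to Niemeier's classification are all sound.

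There is one genuine gap, in your identification $L_{k,2k-1,4k}\cong D_n^+$. You argue that $[L:D_n]=2$ and then write ``since $e_1\notin L$ \ldots, $L\ne\mathbb{Z}^n$''. But the $\mathbb{Z}^n$ appearing as one of the three index-$2$ integral overlattices of your $D_n$ is \emph{not} the ambient $\mathbb{Z}^n$ with the twisted form $(\ |\ )_{k,m,n}$; indeed the ambient $\mathbb{Z}^n$ has index $2m=2(2k-1)$ over your $D_n$, not index $2$. So the observation $e_1\notin L$ does not rule out the $\mathbb{Z}^n$-type overlattice. What you must show is that $L$ has no vector of squared norm $1$. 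This is easy from \eqref{eq:240305-01}: for $x\in L[hm]$ one has $|x|^2=x\cdot x-h^2(k-1)$; on $h=0$ the minimum is $2$; for $|h|\ge 1$ one needs $x\cdot x=1+h^2(k-1)$ while $\sum|x_i|\ge|\mathrm{lat}(x)|=|h|(2k-1)$, and since $x\cdot x\ge\sum|x_i|$ for integer vectors this forces $1+h^2(k-1)\ge|h|(2k-1)$, which fails for $k\ge2$ and all $h\ne0$. Hence $L\not\cong\mathbb{Z}^n$, and the overlattice argument then gives $L\cong D_n^+$. (For even $k$ you can alternatively note that $L$ is even while $\mathbb{Z}^n$ is odd, which already covers the cases $k=2,6$ singled out in the statement.)

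One small wording point on the Niemeier identifications: rather than saying ``no simply-laced rank-$24$ root system properly extends $A_{24}$'', the clean statement is that no rank-$24$ root lattice $R$ can contain $A_{24}$ with $[R:A_{24}]>1$, since $[R:A_{24}]^2\det R=25$ forces $\det R=1$, hence $R\cong E_8^3$, into which the irreducible $A_{24}$ cannot embed. The same determinant argument with $\det D_{24}=4$ handles the $D_{24}$ case.
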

\begin{claim}[Note]
As the rank of an even positive-definite unimodular lattice is divisible by $8$,
if $L_{k,m,n}$ is such a lattice, then $k\leq n/4$ follows by Remark \ref{claim:240315-01}.
For example, if $1\leq m\leq n=24$, then $2\leq k\leq 6$ with $k$ even, and we can easily check by Proposition \ref{claim:240316-02} that the lattices $L_{4,5,24}\cong L_{4,19,24}$ and $L_{6,11,24}\cong L_{6,13,24}\cong D^{+}_{24}$ in Example \ref{claim:240318-04} are the only possibilities.
It is not difficult to show that $L_{n/4,n/2-1,n}\cong L_{n/4,n/2+1,n}\cong D^+_{n}$ is the only possibility in case $n$ is a power of $2$.
\end{claim}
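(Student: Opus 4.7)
The plan is to combine Proposition \ref{claim:240316-02} with the parity observation that $L_{k,m,n}$ is even precisely when $k$ is even, as follows from the diagonal entry $m^2-m+k$ of the Gram matrix $G_{k,m,n}$. The bound $k \leq n/4$ is immediate: for an even positive-definite unimodular lattice we have $\det L = 1$ and $8 \mid n$, so Remark \ref{claim:240315-01} gives $k \leq n/4 + 1/n$, and since $k$ and $n/4$ are integers with $1/n < 1$ we conclude $k \leq n/4$.

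For $n = 24$, the allowed range of $k$ is $\{2, 4, 6\}$, and for each such $k$ I would apply Proposition \ref{claim:240316-02} and solve $pq = k^2 - 1$, $p + q = 24 - 2k$ in nonnegative integers. This amounts to checking whether the discriminant $(24 - 2k)^2 - 4(k^2 - 1)$ is a perfect square; the values $388$, $196$, $4$ for $k = 2, 4, 6$ respectively show that only the latter two give solutions, namely $\{p, q\} = \{1, 15\}$ and $\{p,q\}=\{5, 7\}$, which via $m = k + p$ yield exactly the four lattices listed. Their identification with $N(A_{24})$ and $D^{+}_{24} = N(D_{24})$ is then immediate from Example \ref{claim:240318-04} together with the isomorphism between opposite lattices.

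For $n = 2^s$ with $s \geq 3$, the same reduction requires $(2^s - 2k)^2 - 4(k^2 - 1) = 2^{2s} - 2^{s+2}k + 4$ to be a perfect square. Writing it as $(2v)^2$, this is equivalent to
\[
v^2 - 1 = 2^s (2^{s-2} - k).
\]
The main obstacle is the $2$-adic analysis of this identity: it forces $v^2 \equiv 1 \pmod{2^s}$, so $v$ is odd and lies in the $2$-torsion subgroup of $(\mathbb{Z}/2^s\mathbb{Z})^{\times}$, which for $s \geq 3$ consists of the four classes represented by $\pm 1$ and $\pm(2^{s-1} - 1)$ modulo $2^s$. Restricting to $0 < v < 2^{s-1}$ by symmetry leaves only $v = 1$ and $v = 2^{s-1} - 1$. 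The former yields $k = 2^{s-2} = n/4$, which is even, and produces $L_{n/4, n/2 - 1, n} \cong L_{n/4, n/2 + 1, n} \cong D^{+}_n$ by Example \ref{claim:240318-04} (2); the latter yields $k = 1$, which is odd and therefore excluded by evenness. This exhausts all possibilities.
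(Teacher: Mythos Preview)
Your proof sketch is correct. The paper presents this statement as a Note without proof, merely asserting that the $n=24$ case ``can easily'' be checked via Proposition \ref{claim:240316-02} and that the power-of-$2$ case is ``not difficult''; you have supplied exactly the kind of argument the paper implicitly invites, using the same ingredients (Remark \ref{claim:240315-01}, Proposition \ref{claim:240316-02}, and the evenness criterion from $G_{k,m,n}$).

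A few minor comments. The phrase ``restricting to $0<v<2^{s-1}$ by symmetry'' is slightly misleading: the symmetry $v\leftrightarrow -v$ only lets you assume $v\geq 0$, while the upper bound $v\leq 2^{s-1}-1$ comes from the constraint $k\geq 1$ (equivalently $v^2\leq (2^{s-1}-1)^2$), not from any symmetry. This does not affect the correctness of the argument. You might also make explicit that $p+q=2^s-2k$ is even, so the square root of the discriminant must itself be even, which is what justifies writing it as $(2v)^2$. Finally, the lower bound $k\geq 2$ is implicit in your setup but worth stating: since $1\leq m\leq n$ gives $k=|e_1+\cdots+e_m|^2>0$ by positive-definiteness, evenness forces $k\geq 2$.
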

%
\subsection{Orthogonal frames and symmetric designs}
\label{subsection:OrthogonalFrames}
%
Let $k,m,n$ be integers with $k\ne 0$ and $1\leq m\leq n-1$, and set $X_{n}=\{1,\ldots,n\}$.
We will call each element of $X_{n}$ a {\it point} and a subset of $X_n$ consisting of $m$ distinct points an {\it $m$-set}. 

For each $m$-set $I$, write $e_I=\sum_{i\in I}e_i$, which is an element of latitude $m$ and belongs to $L=L_{k,m,n}$. 
Then $(e_I|e_J)=|I\cap J|+k-m$ for $m$-sets $I$ and $J$, hence $(e_I|e_J)=0$ if and only if $|I\cap J|=m-k$. 

Let $\mathcal{B}$ be a set of $m$-sets of $X_n$, and let $E(\mathcal{B})$ denote the set of elements of the form $e_{I}$ for $I\in \mathcal{B}$.
If $|I\cap J|=m-k$ for any distinct $I,J\in\mathcal{B}$, then the elements of $E(\mathcal{B})$ are orthogonal to each other with squared norm $k$.
Therefore, under the same assumption and $|\mathcal{B}|=n$, the set $E(\mathcal{B})$ is an {\it orthogonal $k$-frame}\/ of $L$, where $k$ necessarily satisfies $1\leq k\leq m$.

The condition that $|I\cap J|$ is constant for distinct $I,J\in\mathcal{B}$ means that the pair $(X_n,\mathcal{B})$ is the {\it dual} of a {\it $2$-design}, which becomes a {\it symmetric $2$-design}\/ (or a {\it square $2$-design}) if and only if $|\mathcal{B}|=n$. 
The standard parameter of such a symmetric design is given by $2$-$(n,m,\lambda)$, where $\lambda$ is the intersection number $|I\cap J|$ of distinct $I,J\in \mathcal{B}$.

In summary, we have the following. 
\begin{claim}[Proposition]
\label{claim:240326-01}
\sl
Let $k,m,n$ be integers with $1\leq k\leq m\leq n-1$, and let $\mathcal{B}$ be a set of $m$-sets of $X_{n}$.
Then $E(\mathcal{B})$ is an orthogonal $k$-frame of $L_{k,m,n}$ if and only if $(X_n,\mathcal{B})$ is a symmetric $2$-$(n,m,m-k)$ design.
\end{claim}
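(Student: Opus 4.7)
The plan is as follows. The paragraph preceding the proposition already observes that $(e_I|e_J)=|I\cap J|+(k-m)$ for $m$-sets $I,J$, so $|e_I|^2=k$, and $e_I\perp e_J$ iff $|I\cap J|=m-k$. Thus $E(\mathcal{B})$ is an orthogonal $k$-frame of $L_{k,m,n}$ exactly when $|\mathcal{B}|=n$, the vectors $e_I$ are linearly independent, and $|I\cap J|=m-k$ for all distinct $I,J\in\mathcal{B}$. It therefore suffices to show that this combinatorial condition is equivalent to $(X_n,\mathcal{B})$ being a symmetric $2$-$(n,m,m-k)$ design.

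For the \emph{if} direction, I would invoke the classical theorem (e.g.\ \cite{BethEtAl}) that in any symmetric $2$-$(v,k,\lambda)$ design, any two distinct blocks intersect in exactly $\lambda$ points. Applied to our design this yields $|I\cap J|=m-k$ for distinct $I,J\in\mathcal{B}$, while symmetry gives $|\mathcal{B}|=n$. Since the $n$ vectors $e_I$ are mutually orthogonal with nonzero squared norm $k$, they are automatically linearly independent, so $E(\mathcal{B})$ is an orthogonal $k$-frame.

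For the \emph{only if} direction, let $A$ be the $n\times n$ point-block incidence matrix of $(X_n,\mathcal{B})$. The hypothesis $|I\cap J|=m-k$ for distinct blocks translates into $A^TA=kI_n+(m-k)J_n$, and linear independence of the $e_I$ says $A$ is nonsingular. The plan is to apply the identity $A(A^TA)=(AA^T)A$ together with $AJ_n=\mathbf{r}\mathbf{1}^T$, where $\mathbf{r}=A\mathbf{1}$ is the column vector of replication numbers, and $\mathbf{1}^TA=m\mathbf{1}^T$ (column sums of $A$ are the block sizes $m$). Solving for $AA^T$ after right-multiplication by $A^{-1}$ yields
\[
AA^T=kI_n+\frac{m-k}{m}\mathbf{r}\mathbf{1}^T.
\]
Since $AA^T$ is symmetric, the rank-one piece $\mathbf{r}\mathbf{1}^T$ must be symmetric, which forces $\mathbf{r}=r\mathbf{1}$ for some scalar $r$. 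Counting total incidences gives $nr=\sum_{I\in\mathcal{B}}|I|=nm$, whence $r=m$ and $AA^T=kI_n+(m-k)J_n$. This says every pair of distinct points lies in exactly $m-k$ common blocks, so $(X_n,\mathcal{B})$ is a $2$-$(n,m,m-k)$ design; since $|\mathcal{B}|=n=|X_n|$, it is symmetric.

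The main obstacle is the \emph{only if} direction, where one must promote the dual condition (constant pairwise intersections of blocks) to the genuine 2-design condition (constant pair frequency). The linear-algebraic argument above works precisely because nonsingularity of $A$, supplied for free by the frame hypothesis, lets one transport spectral information from $A^TA$ to $AA^T$; without the frame assumption one would need an additional nondegeneracy check on $k+(m-k)n$.
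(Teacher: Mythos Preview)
Your proof is correct. The paper does not give a separate proof of this proposition; it is presented as a summary (``In summary, we have the following'') of the discussion immediately preceding it. That discussion reduces the frame condition to the combinatorial conditions $|\mathcal{B}|=n$ and $|I\cap J|=m-k$ for distinct $I,J$, and then simply quotes the standard design-theoretic fact that an incidence structure with $v=b$, constant block size $m$, and constant pairwise block-intersection number $\lambda$ is (the dual of, hence) a symmetric $2$-$(n,m,\lambda)$ design, with the design-theory references \cite{BethEtAl}, \cite{Lander} in the background.

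Your treatment agrees with the paper on the \emph{if} direction (both invoke the classical block-intersection theorem for symmetric designs). For the \emph{only if} direction you go further and actually supply a self-contained incidence-matrix proof of the quoted fact: from $A^{\mathsf T}A=kI_n+(m-k)J_n$ and invertibility of $A$ you deduce $AA^{\mathsf T}=kI_n+(m-k)J_n$, hence the $2$-design property on points. This is the standard linear-algebra proof of the duality theorem for symmetric designs, so the two approaches are really the same argument viewed at different levels of detail; yours is more self-contained, the paper's is shorter. One minor remark: your ``symmetry of the rank-one term'' step tacitly assumes $m\ne k$. When $m=k$ the rank-one term vanishes; but in fact comparing diagonal entries in your identity $AA^{\mathsf T}=kI_n+\tfrac{m-k}{m}\,\mathbf{r}\mathbf{1}^{\mathsf T}$ already gives $r_p=k+\tfrac{m-k}{m}r_p$, i.e.\ $r_p=m$ for every $p$, which handles all cases uniformly and makes the symmetry step (and the subsequent incidence count) unnecessary.
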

\begin{claim}[Example]
\label{claim:240319-01}
\rm
Consider the lattice $L_{k,2k-1,4k-1}$ ($k\geq 1$) of determinant $k$, which is a sublattice of $L_{k,2k-1,4k}$ of Example \ref{claim:240318-04} (2) and contains a sublattice of type $A_{4k-1}\subset D_{4k}$. 
If $(X_{4k-1},\mathcal{B})$ is a symmetric $2$-$(4k-1,2k-1,k-1)$ design, then $E(\mathcal{B})$ becomes an orthogonal $k$-frame of $L_{k,2k-1,4k-1}$. 
Existence of such a design is equivalent to that of an Hadamard matrix of order $4k$  (cf.\ \cite{BethEtAl},\cite{Lander}). 
Thus the design exists if $k$ is a power of $2$, for example, and existence for all $k$ is an open problem equivalent to the Hadamard conjecture \cite{Hadamard}. 
\end{claim}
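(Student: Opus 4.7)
The plan is to prove the Example as a direct specialisation of the preceding Proposition \ref{claim:240326-01}, where only its forward implication (symmetric $2$-design $\Rightarrow$ orthogonal $k$-frame) is needed.

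With $(k, m, n) = (k, 2k-1, 4k-1)$ and $\lambda := m - k = k - 1$, the hypothesis $1 \leq k \leq m \leq n-1$ of the proposition holds for every $k \geq 1$. A symmetric $2$-$(4k-1, 2k-1, k-1)$ design is then the same as a symmetric $2$-$(n, m, m-k)$ design in the notation of the proposition, and the forward direction of the proposition yields that $E(\mathcal{B})$ is an orthogonal $k$-frame of $L_{k, 2k-1, 4k-1}$.

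Unpacking the single combinatorial input used: the classical fact invoked is that in a symmetric $2$-$(n, m, \lambda)$ design every pair of distinct blocks meets in exactly $\lambda$ points (see \cite{BethEtAl}, \cite{Lander}). Combined with the identities $|e_I|^2 = k$ and $(e_I|e_J) = |I \cap J| + k - m$ already derived just above Proposition \ref{claim:240326-01}, one obtains, for distinct $I, J \in \mathcal{B}$, that $(e_I|e_J) = (k - 1) + k - (2k - 1) = 0$, so the vectors $e_I$ are pairwise orthogonal of squared norm $k$. Since $|\mathcal{B}| = n = 4k - 1$ equals the rank of $L_{k, 2k-1, 4k-1}$ and pairwise orthogonal nonzero vectors are linearly independent, $E(\mathcal{B})$ is indeed an orthogonal $k$-frame.

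There is no substantive obstacle. The only implication used from design theory is symmetric $2$-design $\Rightarrow$ constant pairwise block intersection; the converse implication (constant intersection $\Rightarrow$ symmetric $2$-design) is \emph{not} required here. The supporting statements in the Example --- that $\det L_{k, 2k-1, 4k-1} = k$ (by substituting into $\det L_{k,m,n} = m^2 - mn + kn$ of Subsect.\ \ref{subsection:Determinants}), that $L_{k, 2k-1, 4k-1}$ sits inside $L_{k, 2k-1, 4k} \cong D_{4k}^+$ of Example \ref{claim:240318-04} via appending a zero coordinate, that an $A_{4k-1}$ sublattice is inherited from the $D_{4k}$ sublattice there, and that the existence of such a design is equivalent to the existence of a Hadamard matrix of order $4k$ --- are all routine or standard and require no further work.
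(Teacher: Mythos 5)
Your proposal is correct and follows exactly the route the paper intends: the Example is stated as a direct specialisation of Proposition \ref{claim:240326-01} with $(k,m,n)=(k,2k-1,4k-1)$, so that $m-k=k-1$ and the design condition matches, with the Hadamard equivalence cited as standard. Your unpacking of the intersection computation $(e_I|e_J)=(k-1)+k-(2k-1)=0$ and the determinant check $\det L_{k,2k-1,4k-1}=k$ merely makes explicit what the paper leaves implicit, and is accurate throughout.
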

We may generalise the construction of orthogonal subsets as above by replacing $e_I$ with $de_I$ or $m$-sets with $dm$-sets etc., where $d$ is a positive integer and $dm\leq n$ for the latter.
\begin{claim}[Example]
\label{claim:240325-01}
Consider the lattice $L_{k,2k-1,4k}=D_{4k}^{+}$ in Example \ref{claim:240318-04} (2) and set 
$\mathcal{B}=\{X_{4k}\setminus\{2i-1,2i\}\,|\,i=1,\ldots,2k\}$.
Then the elements of $E(\mathcal{B})$ together with $e_1-e_2,e_3-e_4,\ldots,e_{4k-1}-e_{4k}$ form an orthogonal $2$-frame of $D_{4k}^{+}$.
\end{claim}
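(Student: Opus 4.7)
The plan is to exhibit the given $4k$ vectors explicitly, confirm that they all lie in $L=L_{k,2k-1,4k}$, and then verify the three defining properties of an orthogonal $2$-frame: each has squared norm $2$, they are pairwise orthogonal, and their number equals the rank $n=4k$. The core of the argument is a direct substitution into the formula for $(\ |\ )_{k,m,n}$ and a small case distinction for the mixed inner products.

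First I would confirm membership in $L$. Each $\alpha_i:=e_{2i-1}-e_{2i}$ has latitude $0$ and so lies in the base lattice $M=L[0]$. For each $I\in\mathcal{B}$ we have $|I|=4k-2=2m$ with $m=2k-1$, so $\mathrm{lat}(e_I)=2m$ is divisible by $m$ and hence $e_I\in L$. This is precisely the generalisation flagged just before the statement: we use $2m$-sets rather than $m$-sets, corresponding to $d=2$.

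Next I would exploit the formula $(x|y)=x\cdot y+(k-m)\,\mathrm{lat}(x)\,\mathrm{lat}(y)/m^2$. Here $k-m=1-k$, and for any $e_I$ with $I\in\mathcal{B}$ the correction term equals $4(2k-1)^2(1-k)/(2k-1)^2=4(1-k)$. Combined with the Euclidean values $|I|=4k-2$ and, for distinct $I,J\in\mathcal{B}$, $|I\cap J|=4k-4$, this yields
\[
|e_I|^2=(4k-2)+4(1-k)=2,\qquad (e_I|e_J)=(4k-4)+4(1-k)=0.
\]
The norm $|\alpha_i|^2=2$ and orthogonality $(\alpha_i|\alpha_j)=0$ for $i\ne j$ are immediate. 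For mixed orthogonality, note that $\mathrm{lat}(\alpha_j)=0$ kills the correction term, so $(e_I|\alpha_j)=e_I\cdot\alpha_j$; a simple case split shows $\{2j-1,2j\}\cap I$ is either empty (when $i=j$) or equal to $\{2j-1,2j\}$ (when $i\ne j$), so the two Euclidean contributions always cancel.

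Counting gives $2k+2k=4k=n$ mutually orthogonal vectors in $L$ of squared norm $2$, which is exactly an orthogonal $2$-frame. There is no real obstacle beyond careful bookkeeping with the sign $k-m=1-k$; the Hadamard-style design interpretation of Proposition \ref{claim:240326-01} is not directly available because $|I\cap J|=2m-2\ne m-k$ in general, which is why the direct computation via the bilinear form formula is the natural route.
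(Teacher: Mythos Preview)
Your verification is correct: the membership, norm, and orthogonality checks all go through exactly as you compute, and the count $2k+2k=4k$ gives a full orthogonal $2$-frame. The paper states this example without proof, so there is no alternative argument to compare against; your direct substitution into \eqref{eq:240305-01} is precisely the intended routine verification.
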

\begin{claim}[Note]
An integral lattice $L$ admits an orthogonal $k$-frame if and only if it is constructed from a code over $\mathbb{Z}/k\mathbb{Z}$ by (a generalization of) Construction A (cf.\ \cite{ConwaySloane},\cite{HaradaEtAl}).
If $L$ is an irreducible root lattice, it is the case when $L$ is of type $A_1$, $D_n$ ($n\geq 4$ and even), $E_7$, and $E_8$ (cf.\ \cite{Ebeling}).
An orthogonal $2$-frame of $D_n=L_{2,2,n}$ ($n$ even) is easily found to be given by $e_1\pm e_2, \ldots,e_{n-1}\pm e_{n}$, and one for $E_7$ is described by Example \ref{claim:240319-01} by letting $(X_7,\mathcal{B})$ be the Fano plane.
The seven roots of an orthogonal $2$-frame of $E_7$ and the highest root of $E_8$ form an orthogonal $2$-frame of $E_8$. Example \ref{claim:240325-01} with $k=2$ gives an alternative construction of such a frame of $E_8$. 
\end{claim}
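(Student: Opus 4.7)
The Note consists of several assertions; the plan is to treat them in order, with the classification of root lattices admitting an orthogonal $k$-frame being the main obstacle.

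For the equivalence with Construction A, given a $k$-frame $f_1,\ldots,f_n$ of $L$, set $N=\bigoplus_{i=1}^n\mathbb{Z}f_i\cong\sqrt{k}\mathbb{Z}^n$, whose dual inside $L_\mathbb{Q}$ is $N^*=\bigoplus(1/k)\mathbb{Z}f_i$. Integrality of $L$ gives $N\subseteq L\subseteq N^*$, so that $L/N$ embeds into $N^*/N\cong(\mathbb{Z}/k\mathbb{Z})^n$ as an additive code $C$. Conversely, any code $C$ whose pullback is an integral lattice $L$ recovers $\{f_i\}$ tautologically as a $k$-frame; the integrality constraint on $C$ is the ``generalisation'' alluded to.

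For the positive direction of the classification, the four families are verified by exhibiting frames. $A_1$ is itself a $2$-frame. In $D_n$ with $n$ even, the $n$ vectors $e_{2i-1}\pm e_{2i}$ are pairwise orthogonal of norm $2$, by disjoint support across pairs and $(e_{2i-1}+e_{2i})\cdot(e_{2i-1}-e_{2i})=0$ within each pair. For $E_7=L_{2,3,7}$, the Fano plane is a symmetric $2$-$(7,3,1)$ design, so Proposition \ref{claim:240326-01} with $k=m-\lambda=2$ gives an orthogonal $2$-frame indexed by its seven lines. For $E_8$, fix the standard embedding $E_7\hookrightarrow E_8$ in which the roots of $E_7$ are those orthogonal to the highest root $\tilde\alpha$; then $\tilde\alpha$ adjoins to the $E_7$-frame to produce eight orthogonal norm-$2$ vectors. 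The alternative construction from Example \ref{claim:240325-01} with $k=2$ is verified directly using $|e_I|^2=|I|+(k-m)|I|^2/m^2$, which gives $2$ for $|I|=6$ and $(k,m)=(2,3)$, along with $|I\cap J|=4$ for the chosen blocks and the manifest orthogonality of the $e_{2i-1}-e_{2i}$.

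The harder direction---that the remaining irreducible root lattices $A_n$ ($n\geq 2$), odd $D_n$ ($n\geq 3$), and $E_6$ admit no orthogonal $k$-frame for any $k$---is the main obstacle and is attributed to \cite{Ebeling}. The strategy translates the question into existence of a sublattice isometric to $\sqrt{k}\mathbb{Z}^n$, imposing the necessary conditions that $k^n/\det L$ be a perfect square and that $L^*/L$ embed appropriately in $(\mathbb{Z}/k\mathbb{Z})^n$ modulo the code. A complementary automorphism-group argument notes that the hyperoctahedral group $(\mathbb{Z}/2\mathbb{Z})\wr S_n$ of order $2^n n!$ stabilises any $k$-frame and hence embeds in $\mathrm{Aut}(L)$; comparing group orders rules out $A_n$ for $n\geq 4$ outright, and excludes $E_6$ by incompatibility with $|W(E_6)|$. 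The small-rank exceptions $A_2$, $A_3$, and the odd $D_n$ are then handled by direct inspection of their short-vector structure, which I expect to be the most tedious step though elementary once the framework is in place.
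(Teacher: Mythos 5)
Your constructive parts are fine and are essentially what the paper intends: the equivalence with Construction A via $N=\bigoplus_i\mathbb{Z}f_i\subseteq L\subseteq N^*$ is the standard argument behind the citations of \cite{ConwaySloane} and \cite{HaradaEtAl}, and the frames you exhibit for $A_1$, $D_n$ ($n$ even), $E_7$ (Fano plane via Proposition \ref{claim:240326-01}, which is exactly what Example \ref{claim:240319-01} does for $k=2$) and $E_8$ (both constructions) are correct and match the paper's ``easily found'' verifications. The classification of irreducible root lattices, however, is only cited by the paper to \cite{Ebeling}, and the argument you sketch for it contains a fatal error. The assertion that the hyperoctahedral group $(\mathbb{Z}/2\mathbb{Z})\wr S_n$ of order $2^nn!$ ``stabilises any $k$-frame and hence embeds in $\mathrm{Aut}(L)$'' is false: a signed permutation of the frame preserves $N$ and $N^*$, but in general it moves the intermediate lattice $L$ to a \emph{different} lattice between $N$ and $N^*$; only the signed permutations preserving the code $C=L/N\subseteq(\mathbb{Z}/k\mathbb{Z})^n$ induce automorphisms of $L$. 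The very statement you are proving refutes this step: $E_7$ admits a $2$-frame, yet $2^7\cdot 7!=2^{11}\cdot3^2\cdot5\cdot7$ does not divide $|\mathrm{Aut}(E_7)|=|W(E_7)|=2^{10}\cdot3^4\cdot5\cdot7$ (the $2$-parts already fail), and likewise $2^8\cdot8!$ has $2$-part $2^{15}$ while $|W(E_8)|$ has $2$-part $2^{14}$. So your reasoning would ``disprove'' the frames you constructed two sentences earlier; in particular the exclusions of $A_n$ ($n\geq4$) and $E_6$ are unsupported, and the remaining cases are deferred to an unexecuted inspection.

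There is also a problem with how you read the classification. You set out to prove that $A_n$ ($n\geq2$), odd $D_n$, and $E_6$ admit no orthogonal $k$-frame \emph{for any} $k$; that statement is false, since $2e_1,\ldots,2e_n$ is an orthogonal $4$-frame of every $D_n$, including $D_3=A_3$ and all odd $n$. The classification cited from \cite{Ebeling} concerns $k=2$, i.e.\ binary codes, in which case the frame vectors have norm $2$ and hence are roots; with that reading the negative direction is elementary and needs none of your machinery. In $A_n$ two roots $e_i-e_j$ are orthogonal only if their supports are disjoint, so at most $\lfloor(n+1)/2\rfloor<n$ pairwise orthogonal roots exist for $n\geq2$. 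In $D_n$ two orthogonal roots $\pm e_i\pm e_j$ have equal or disjoint supports, and each support carries at most two mutually orthogonal roots, so a pairwise orthogonal set has at most $2\lfloor n/2\rfloor$ elements, which is $n-1$ when $n$ is odd. Finally, $E_6$ is excluded because a $2$-frame would force $2^6/\det E_6=2^6/3$ to equal the square of the index $[E_6:A_1^6]$, which is not even an integer. Your determinant criterion (the first necessary condition you state) does dispose of $E_6$ and $A_2$, but note that it is not sufficient along the $A_n$ series ($2^7/\det A_7=16$ is a perfect square, yet $A_7$ has no $2$-frame), so the combinatorial support argument is genuinely needed there.
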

%
\def\thesection{\Alph{section}}
\setcounter{section}{0}
\section{Roots of the root lattices}
\label{appendix:TheRoots}
%
We write the shape of an element $x\in L_{k,m,n}\subset\mathbb{Z}^n$ as 
\[
\label{eq:240311-02}
(\cdots (-2)^{d_{-2}}(-1)^{d_{-1}}0^{d_0}1^{d_1}2^{d_2}\cdots)
,\] 
where $d_j$ is the number of $j$'s in the coordinates of $x$.
We often omit $0^{d_0}$. 
For example, $(1^3)$ denotes an element of the form $e_{i_1}+e_{i_2}+e_{i_3}$ with distinct $i_1,i_2,i_3$, or the set of such elements, depending on the context. 

Recall the isomorphism $\theta_{m,n}\colon L_{k,m,n}\to L_{k,n-m,n}$ of opposite lattices (Subsect.\ \ref{240318-09}), which sends the roots of $L_{k,m,n}$ bijectively onto the roots of $L_{k,n-m,n}$.
Therefore, as long as the lists of roots are concerned, we may assume $m\leq n-m$ without loss of generality. 
%
\subsection{Roots of \boldmath$L_{2,m,n}$ ($m=1,2,3$)}
%
%
The symbol $_{n}\mathrm{C}_m$ refers to the binomial coefficient $\binom{n}{m}$.
\paragraph{\boldmath$L_{2,1,n}=A_n$ ($n\geq 1$)}
\[
\xymatrix@M0pt@R0pt@C0pt{
\scriptstyle \hskip-2em e_1\hskip-2em\vcenter to 2.5ex{}\cr
\circ\ar@{-}[dd]\cr
\vcenter to 2.5em{}\cr
\circ\ar@{-}[rr]&\hskip3em&\circ\ar@{-}[rr]&\hskip3em&\ \cdots\ \ar@{-}[rr]&\hskip3em&\circ\cr
\scriptstyle \hskip-1em e_2-e_1\hskip-1em&&\scriptstyle \hskip-1em e_3-e_2\hskip-1em &&\scriptstyle  &&\scriptstyle \hskip-2em e_{n}-e_{n-1}\hskip-2em \cr
}
\]
\[
\begin{tabular}{ccll}
latitude&shape&\hfill number\cr
\hline
$\phantom{-}1$&$\phantom{(-1)^6}1^1$&$\hfill {_{n}\mathrm{C}_{1}}=n\phantom{(n-1)}$\cr
$\phantom{-}0$&$(-1)^11^1$&\hfill$2\,{_{n}\mathrm{C}_2}=n(n-1)$\cr
$-1$&$(-1)^1\phantom{1^1}$&\hfill${_{n}\mathrm{C}_1}=n\phantom{(n-1)}$\cr
\hline&&\hfill \llap{total: }$n(n+1)$\rlap{.}
\end{tabular}
\]
%
\paragraph{\boldmath$L_{2,2,n}=D_n$ ($n\geq 2$)}
\[
\xymatrix@M0pt@R0pt@C0pt{
&&\scriptstyle \hskip-2em e_1+e_2\hskip-2em\vcenter to 2.5ex{}\cr
&&\circ\ar@{-}[dd]\cr
\vcenter to 2.5em{}\cr
\circ\ar@{-}[rr]&\hskip3em&\circ\ar@{-}[rr]&\hskip3em&\circ\ar@{-}[rr]&\hskip3em&\ \cdots\ \ar@{-}[rr]&\hskip3em&\circ\cr
\scriptstyle \hskip-1em e_2-e_1\hskip-1em&&\scriptstyle \hskip-1em e_3-e_2\hskip-1em &&\scriptstyle \hskip-1em e_4-e_3\hskip-1em&&\scriptstyle  &&\scriptstyle \hskip-2em e_{n}-e_{n-1}\hskip-2em \cr
}
\]
\[
\begin{tabular}{ccll}
latitude&shape&\hfill number\cr
\hline
$\phantom{-}2$&$\phantom{(-1)^6}1^2$&$\hfill {_{n}\mathrm{C}_2}=n(n-1)/2$\cr
$\phantom{-}0$&$(-1)^11^1$&\hfill$2\,{_{n}\mathrm{C}_2}=n(n-1)\phantom{/2}$\cr
$-2$&$(-1)^2\phantom{1^1}$&\hfill${_{n}\mathrm{C}_2}=n(n-1)/2$\cr
\hline&&\hfill \llap{total: }$2n(n-1)\rlap{.}\phantom{/2}$
\end{tabular}
\]
%
%
\paragraph{\boldmath$L_{2,3,n}=E_n$ ($n=6,7,8$)}
\[
\xymatrix@M0pt@R0pt@C0pt{
&&&&\scriptstyle \hskip-3em e_1+e_2+e_3\hskip-3em\vcenter to 2.5ex{}\cr
&&&&\circ\ar@{-}[dd]\cr
\vcenter to 2.5em{}\cr
\circ\ar@{-}[rr]&\hskip3em&\circ\ar@{-}[rr]&\hskip3em&\circ\ar@{-}[rr]&\hskip3em&\ \cdots\ \ar@{-}[rr]&\hskip3em&\circ\cr
\scriptstyle \hskip-1em e_2-e_1\hskip-1em&&\scriptstyle \hskip-1em e_3-e_2\hskip-1em &&\scriptstyle \hskip-1em e_4-e_3\hskip-1em&&\scriptstyle  &&\scriptstyle \hskip-2em e_{n}-e_{n-1}\hskip-2em \cr
\vcenter to 2ex{}
\cr
}
\]
\paragraph{\boldmath$L_{2,3,6}=E_6$}
\[
\begin{tabular}{ccll}
latitude&shape&\hfil number\cr
\hline
$\phantom{-}6$&$\phantom{(-1)^6}1^6$&$\hfill_6\mathrm{C}_6=\phantom{0}1$\cr
$\phantom{-}3$&$\phantom{(-1)^6}1^3$&$\hfill_6\mathrm{C}_3=20$\cr
$\phantom{-}0$&$(-1)^11^1$&$2\,{_{6}\mathrm{C}_{2}}=30$\cr
$-3$&$(-1)^3\phantom{1^1}$&\hfill$_6\mathrm{C}_3=20$\cr
$-6$&$(-1)^6\phantom{1^1}$&\hfill$_6\mathrm{C}_6=\phantom{0}1$\cr
\hline&&\hfill \llap{total: }$72$\rlap{.}
\cr
\end{tabular}
\]
\paragraph{\boldmath$L_{2,3,7}=E_7$}
\[
\begin{tabular}{ccll}
latitude&shape&\hfill number\cr
\hline
$\phantom{-}6$&$\phantom{(-1)^6}1^6$&$\hfill_7\mathrm{C}_6=\phantom{0}7$\cr
$\phantom{-}3$&$\phantom{(-1)^6}1^3$&$\hfill_7\mathrm{C}_3=35$\cr
$\phantom{-}0$&$(-1)^11^1$&\hfill$2\,{_{7}\mathrm{C}_{2}}=42$\cr
$-3$&$(-1)^3\phantom{1^1}$&\hfill$_7\mathrm{C}_3=35$\cr
$-6$&$(-1)^6\phantom{1^1}$&\hfill$_7\mathrm{C}_6=\phantom{0}7$\cr
\hline&&\hfill \llap{total: }$126$\rlap{.}
\end{tabular}
\]
\paragraph{\boldmath$L_{2,3,8}=E_8$}
\[
\begin{tabular}{ccll}
latitude&shape&\hfill number\cr
\hline
$\phantom{-}9$&$\phantom{(-2)^1}\phantom{(-1)^6}1^72^1$&$\hfill_8\mathrm{C}_1=\phantom{0}8$\cr
$\phantom{-}6$&$\phantom{(-2)^1}\phantom{(-1)^6}1^6\phantom{2^1}$&$\hfill_8\mathrm{C}_6=28$\cr
$\phantom{-}3$&$\phantom{(-2)^1}\phantom{(-1)^6}1^3\phantom{2^1}$&$\hfill_8\mathrm{C}_3=56$\cr
$\phantom{-}0$&$\phantom{(-2)^1}(-1)^11^1\phantom{2^1}$&\hfill$2\,{_{8}\mathrm{C}_{2}}=56$\cr
$-3$&$\phantom{(-2)^1}(-1)^3\phantom{1^1}\phantom{2^1}$&\hfill$_8\mathrm{C}_3=56$\cr
$-6$&$\phantom{(-2)^1}(-1)^6\phantom{1^1}\phantom{2^1}$&\hfill$_8\mathrm{C}_6=28$\cr
$-9$&$(-2)^1(-1)^7\phantom{1^12^1}$&\hfill$_8\mathrm{C}_1=\phantom{0}8$\cr
\hline&&\hfill \llap{total: }$240$\rlap{.}
\end{tabular}
\]
%
\subsection{Orders of the Weyl groups}
%
%
Let $X_n$ denote the root lattice $L_{2,m,n}$ of type $A_n$, $D_n$, and $E_n$ for $m=1$, $2$, and $3$, respectively.
Recall the element $z_n$ given in Remark \ref{claim:240402-01} (2):
\[
z_n=(m-2)(e_1+\cdots+e_n)+(\det X_n)e_n
.\]
Then $(x|z_n)=(\det X_n)x_n$ for $x\in X_n$.
Hence $(x|z_n)=0$ if and only if $x_n=0$, and it follows that the isotropy group of $z_n$ in the Weyl group $W(X_n)$ agrees with the subgroup $W(X_{n-1})$ (cf.\ \cite{Humphreys}).
Thus the orders of $W(X_n)$ are calculated recursively by 
$
|W(X_n)|=c_n\,|W(X_{n-1})|
$, where $c_n$ denotes the cardinality of the $W(X_n)$-orbit of $z_n$.

For $A_n$ and $D_n$, we have $c_n=n+1$ and $c_n=2n$, hence $|W(A_n)|=(n+1)!$ and $|W(D_n)|=2^{n-1}n!$ as expected.

The numbers $c_n$ for $E_n$ and the orders of $W(E_n)$ for $n=6,7,8$ are counted as below; an arrow in the schemes indicates that a reflection with respect to a root of shape $(1^3)$ interchanges a pair of elements of the shapes on both sides of the arrow. 
Note that the action of the subgroup $W(A_{n-1})$ does not change the shapes of elements. 
\paragraph{\boldmath$L_{2,3,6}=E_6$}
\[
|W(E_6)|=27\,|W(D_5)|=27\cdot 1920=51840. 
\]
\[
\begin{tabular}{r}
\xymatrix@R3ex@C1.5em{
z_6\in(1^54^1)
\ar@{<->}[r]
&((-2)^21^4)
\ar@{<->}[r]
&((-2)^51^1)
\hskip-0.4em}
\cr\hline
total: $c_6={_{6}\mathrm{C}_{1}}+{_{6}\mathrm{C}_{2}}+{_{6}\mathrm{C}_{5}}=6+15+6=27$\rlap{.}
\end{tabular}
\]
\paragraph{\boldmath$L_{2,3,7}=E_7$}
\[
|W(E_7)|=56\,|W(E_6)|=56\cdot 51840=2903040.
\]
\[
\begin{tabular}{r}
\xymatrix@R3ex@C1.5em{
z_7\in(1^63^1)
\ar@{<->}[r]&
((-1)^21^5)
\ar@{<->}[r]&
((-1)^51^2)
\ar@{<->}[r]&
((-3)^1(-1)^6)
\hskip-0.4em}
\cr
\hline
total: $c_7={_{7}\mathrm{C}_{1}}+{_{7}\mathrm{C}_{2}}+{_{7}\mathrm{C}_{5}}+{_{7}\mathrm{C}_{6}}=7+21+21+7=56$\rlap{.}
\end{tabular}
\]
\paragraph{\boldmath$L_{2,3,8}=E_8$}
\[
|W(E_8)|=240\,|W(E_7)|=240\cdot 2903040=696729600.
\]
The $W(E_8)$-orbit of $z_8$ is the whole set of the roots since $z_8$ is a root; we have $c_8=240$.

We can count the numbers $c_n$ for $n=4,5$ in the same way as well; we may calculate the orders of $W(E_n)$ ($3\leq n\leq 8$) within the $E_n$ series alone starting with $|W(E_3)|=12$. 
\begin{claim}[Note]
Recall that $w_n=(\det X_n)^{-1}z_n$ is the fundamental weight of $X_n$ corresponding to $\alpha_{n-1}=e_{n}-e_{n-1}$ (Remark \ref{claim:240402-01} (2)).
The $W(E_n)$-orbit of $w_n$ for $n=6$ and $7$ consists of the weights of the minuscule representations of $E_6$ and $E_7$, respectively (cf.\ \cite{Ochiai}). 
For $E_6$, there are two such representations and the weights of the other one constitute the orbit of $-w_6$.
The union of the two orbits for $E_6$ and the orbits for $E_7$ and $E_8$ agree with the sets of minimal vectors of the dual lattices $E_6^*$, $E_7^*$, and $E_8^*=E_8$, respectively, as described in Ref.\ \cite{Shioda}.
The vector $w_n$ is a minuscule weight for $A_n$ ($1\leq n$), $D_n$ ($3\leq n$), and $E_n$ ($4\leq n\leq 7$).
It is a minimal vector of $X_n^*$ for $A_n$ ($1\leq n$), $D_n$ ($4\leq n$), and $E_n$ ($6\leq n\leq 8$).
\end{claim}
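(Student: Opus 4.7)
The plan is to verify the Note's various assertions by reducing each to the explicit description of $w_n$ together with either the standard classification of minuscule weights or a direct computation of norms. The starting point is Remark~\ref{claim:240402-01}(2): since $(x|z_n) = (\det X_n)\, x_n$ for $x \in X_n$, the vector $w_n = (\det X_n)^{-1} z_n$ pairs with any root $\alpha$ as $(w_n|\alpha) = \alpha_n$, the last coordinate of $\alpha$.

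For the minuscule-weight claims I would invoke the characterising condition that a fundamental weight $\omega$ in a simply laced root system is minuscule precisely when $(\omega|\alpha) \in \{-1,0,1\}$ for every root $\alpha$. By the root lists compiled in Appendix~A, the last coordinate of every root of $A_n$, $D_n$ ($n \geq 3$), and $E_n$ ($n \leq 7$) lies in $\{-1,0,1\}$, whereas $E_8$ contains roots of shapes $1^7 2^1$ and $(-2)^1(-1)^7$ whose last coordinate is $\pm 2$. The orbit descriptions then follow from the standard fact that the weights of a minuscule representation form a single Weyl orbit, together with the observation that $-1 \notin W(E_6)$, which accounts for the two mutually dual $27$-dimensional representations and the orbit of $-w_6$. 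For $E_8$ the vector $w_8$ is itself a root, and its orbit is the full root system by transitivity of $W(E_8)$ on roots.

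For the minimum-vector claims, I would compute $|w_n|^2$ directly from the explicit bilinear form~\eqref{eq:240305-01}; the resulting values $n/(n+1)$, $1$, $4/3$, $3/2$, and $2$ for $A_n$, $D_n$, $E_6$, $E_7$, and $E_8$ match the classically known minima of the dual lattices (see \cite{ConwaySloane}). The precise ranges of $n$ then follow coset-by-coset: for $A_n$ every $n \geq 1$ works because all nonzero cosets in $A_n^*/A_n$ have minima of the form $k(n+1-k)/(n+1)$, minimised by $w_n$; for $D_n$ the coset of $w_n = e_n$ realises the global minimum precisely when $n \geq 4$ (for $n=3$ one has $D_3 = A_3$ and a spin coset wins); and for $E_n$ only $n \in \{6,7,8\}$ give a $w_n$ whose norm is at most $2$.

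The main obstacle will be the bookkeeping: matching the non-standard presentation of the Dynkin diagram used in this paper (with $\alpha_{n-1} = e_n - e_{n-1}$ at the long arm of the $Y$-shape) with the Bourbaki labelling, so that one correctly identifies which fundamental weight $w_n$ is in each type. Once the identifications are in place, all the quantitative verifications reduce to short computations with the explicit bilinear form together with a glance at Appendix~A.
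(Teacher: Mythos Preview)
The paper does not actually prove this Note: it is stated as a collection of observations, with the minuscule-representation statements deferred to \cite{Ochiai} and the minimal-vector statements to \cite{Shioda} and the standard references. So there is nothing to compare your argument against; you are supplying a verification that the paper leaves to the literature. Your overall approach is sound and is essentially the natural one: use $(w_n|\alpha)=\alpha_n$ together with the root tables of Appendix~A to check the minuscule criterion, and compute $|w_n|^2$ directly from~\eqref{eq:240305-01} to compare with the known minima of the dual lattices.

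There is one slip. In the last paragraph you write that ``for $E_n$ only $n\in\{6,7,8\}$ give a $w_n$ whose norm is at most~$2$.'' That is false and is not the right criterion: one has $|w_4|^2=6/5$ and $|w_5|^2=5/4$, both at most~$2$. What actually rules out $n=4,5$ is a comparison with the \emph{minimum} of $E_n^*$: since $E_4=A_4$ and $E_5=D_5$, the minima of the duals are $4/5$ and $1$ respectively, and these are strictly smaller than $|w_4|^2$ and $|w_5|^2$. (Concretely, in the $L_{2,3,n}$ model the minimal vectors of $E_4^*$ and $E_5^*$ live in other cosets, e.g.\ the fundamental weight attached to an end of the $A$-chain for $E_4$, and the vector coset $e^i$ for $E_5$.) With that correction your case analysis goes through. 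The remaining bookkeeping you flag---matching the paper's labelling $\alpha_{n-1}=e_n-e_{n-1}$ with Bourbaki's---is routine and does not hide any difficulty.
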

%

\end{document}